\tikzstyle{vertex}=[circle,fill=black]
\tikzstyle{point}=[circle,fill=black]
\tikzstyle{edge} = [draw,thick,-]
\tikzstyle{line} = [draw,thick,-]
\newcommand{\cth}[1]{\multicolumn{1}{c}{#1}}
\newcommand{\QED}{\hfill$\blacksquare$\\}
\newtheorem{theorem}{Theorem $\!\!$}[section]
\newtheorem{lemma}[theorem]{Lemma $\!\!$}
\newtheorem{definition}[theorem]{Definition $\!\!$}
\newenvironment{proof}[1][Proof]%
{\begin{description}\item[\noindent\textbf{#1}:]}
{\QED\end{description}}
\newcommand{\mathCQ}{$C_4^q$}
\newcommand{\CQ}{\protect\mathCQ-factor }
\newcommand{\CQs}{\protect\mathCQ-factors }
\newcommand{\CQp}{\protect\mathCQ-factor}
\newcommand{\CQsp}{\protect\mathCQ-factors}
\newcommand{\CQmarked}{\protect\mathCQ-marked }
\newcommand{\CQmarkable}{\protect\mathCQ-markable }
\newcommand{\CQmarkedpg}{\CQmarked pregraph }
\newcommand{\pCQmarkedpg}{partial \CQmarked pregraph }
\newcommand{\pCQmarkedpgp}{partial \CQmarked pregraph}
\newcommand{\pCQmarkedpgs}{partial \CQmarked pregraphs }
\newcommand{\pCQmarkedpgsp}{partial \CQmarked pregraphs}
\title{Generation of Delaney-Dress graphs}
\author{Nico Van Cleemput}
\affil{Department Applied Mathematics, Computer Science and Statistics\\Ghent University\\Krijgslaan 281 - S9 - WE02\\9000 Ghent\\Belgium\\\texttt{nico.vancleemput@gmail.com}}
\begin{document}

\maketitle

\begin{abstract}
We introduce an algorithm for the efficient generation of cubic pregraphs which have a 2-factor in which each component is a quotient of $C_4$. This class of pregraphs is of particular interest, since it corresponds to the class of uncoloured graphs that are the underlying graphs of Delaney-Dress graphs. We also extend the algorithm to generate Delaney-Dress graphs.
\end{abstract}

\begin{description}
\item[Keywords] Delaney-Dress graph; cubic pregraph; symmetry type graph; isomorphism-free, exhaustive generation; closed structure
\end{description}


\section{Introduction}

A \emph{pregraph} is a multigraph which can also contain loops and semi-edges. In this paper we will only consider connected pregraphs. A \emph{quotient of $C_4$} is a graph isomorphic to one of the four graphs in Figure~\ref{fig:c4-quotients}. A \emph{\CQp} is a 2-factor in which each component is a quotient of $C_4$. A \emph{Delaney-Dress graph} is a 3-edge-coloured cubic pregraph in which the components with colours 0 and 2 form a \CQp.
These graphs arise during the study of periodic tilings. In the context of maps, i.e., 2-cell embeddings of a connected graph in a closed surface, they are also called \emph{symmetry type graphs}.
They are obtained by the taking the quotient of the dual of the barycentric subdivision of the tiling or the graph with the colour preserving automorphism group of that dual. See \cite{DrBr96,pregraphs,HOPF:13} for more details.

\begin{figure}[h]
\begin{center}
\begin{tikzpicture}
\useasboundingbox (-0.5,-0.25) rectangle (1.5,1.25);
\node [vertex, scale=0.75] (a) at (0,0) {};
\node [vertex, scale=0.75] (b) at (1,0) {};
\node [vertex, scale=0.75] (c) at (0,1) {};
\node [vertex, scale=0.75] (d) at (1,1) {};
\draw [thick] (a) to (b);
\draw [thick] (a) to (c);
\draw [thick] (d) to (b);
\draw [thick] (d) to (c);
\end{tikzpicture}
\begin{tikzpicture}
\useasboundingbox (-0.5,-0.75) rectangle (1.5,0.75);
\node [vertex, scale=0.75] (a) at (0,0) {};
\node [vertex, scale=0.75] (b) at (1,0) {};
\draw [thick, bend left] (a) to (b);
\draw [thick, bend right] (a) to (b);
\end{tikzpicture}
\begin{tikzpicture}
\useasboundingbox (-0.5,-0.75) rectangle (1.5,0.75);
\node [vertex, scale=0.75] (a) at (0,0) {};
\node [vertex, scale=0.75] (b) at (1,0) {};
\draw [thick] (a) to (b);
\draw [thick] (a) to (0,0.5);
\draw [thick] (b) to (1,0.5);
\end{tikzpicture}
\begin{tikzpicture}
\useasboundingbox (-1,-0.75) rectangle (1,0.75);
\node [vertex, scale=0.75] (a) at (0,0) {};
\draw [thick] (a) to (-0.3,0.5);
\draw [thick] (a) to (0.3,0.5);
\end{tikzpicture}
\end{center}

\caption{The four types of quotients of $C_4$. From left to right: $q_1$, $q_2$, $q_3$ and $q_4$.}\label{fig:c4-quotients}
\end{figure}
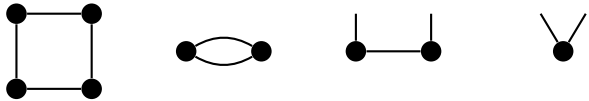

In \cite{pregraphs} an algorithm to generate different classes of cubic pregraphs is presented.
That article also describes a filtering algorithm which can decide in linear time whether a pregraph has a \CQp.
This class of pregraphs is of particular interest, since it corresponds to the class of uncoloured graphs that are the underlying graphs of Delaney-Dress graphs.

In \cite{pregraphs}, it is also noted that, although the filtering algorithm is efficient, a specialised algorithm for that class should be developed, since only a very small proportion of 3-edge-colourable pregraphs, i.e., the class most suited to filter the symmetry type graphs from, has this property.
Once we reach 20 vertices already 99.98\% of the graphs that are generated do not have a \CQ (see Table~\ref{tab:ddgraphratio}).
Section~\ref{sec:cq4marked} introduces such an algorithm. We also use this algorithm to go a step further, and also generate Delaney-Dress graphs.

Delaney-Dress graphs are of interest, since they encode information about the symmetries of a map. From the Delaney-Dress graph of a map, we can see whether the map is regular, transitive on either vertices, edges or faces, as well as several other properties \cite{HOPF:13}. Knowing which Delaney-Dress graphs exist, is therefore crucial when classifying maps. Delaney-Dress graphs also form the underlying structures for Delaney-Dress symbols which fully encode tilings together with their symmetry group. There are several examples of ad-hoc algorithms to generate specific types of Delaney-Dress symbols with applications in mathematics \cite{DressScharlau86,DressHuson87} and in chemistry \cite{DrBr96,azul}.  An algorithm to generate Delaney-Dress graphs forms the basis for a general approach to this type of problems.

\begin{table}
\begin{center}
\begin{tabular}{rrrr@{.}l} 
\toprule
$n$&  {\hfill colourable \hfill} &  {\hfill has \CQ \hfill}  &  \multicolumn{2}{c}{ratio} \\ 
\midrule
1 &  1 & 1 &  100&00 \%\\ 
2 &  3 &  3 &  100&00 \%\\ 
3 &  3 &  2 &  66&67\%\\ 
4 &  11 &  9 &  81&82\%\\ 
5 &  17 &  7 &  41&18\%\\ 
6 &  59 &  29 &  49&15\%\\ 
7 &  134 &  27 &  20&15\%\\ 
8 & 462 &  105 &  22&73\%\\ 
9 & 1 332 &  118 &  8&86\%\\ 
10 &  4 774 &  392 &  8&21\%\\ 
11 &  16 029 &  546 &  3&41\%\\ 
12 &  60 562 &  1 722 &  2&84\%\\ 
13 &  225 117 &  2 701 &  1&20\%\\ 
14 &  898 619 &  7 953 &  0&89\%\\ 
15 &  3 598 323 &  13 966 &  0&39\%\\ 
16 &  15 128 797 &  40 035 &  0&26\%\\ 
17 &  64 261 497 &  75 341&  0&12\%\\ 
18 &  283 239 174 &  210 763 &  0&07\%\\ 
 19 &  1 264 577 606 &  420 422 &  0&03\%\\ 
 20 &  5 817 868 002 &  1 162 192 &  0&02\%\\
 \bottomrule
\end{tabular} 
\caption{Comparison of the number of 3-edge-colourable pregraphs on $n$ vertices and the number of pregraphs with a \CQ on $n$ vertices.}\label{tab:ddgraphratio}
\end{center}
\end{table}


\section{$C^q_4$-marked pregraphs}\label{sec:cq4marked}

\begin{definition}
A \emph{\CQmarked pregraph} is a cubic pregraph in which all the edges of a given \CQ are coloured with colour 0 and all other edges with colour 1.\\
A \emph{\CQmarkable pregraph} is a cubic pregraph which has a \CQp.
\end{definition}

Note that the colouring in a \CQmarked pregraph clearly is not a proper edge-colouring as each vertex is incident to two edges of colour 0.
It is also clear that the underlying uncoloured graph of a \CQmarked pregraph is a pregraph which has a \CQp.
The following theorem shows that for a given $n$ for almost all \CQmarkable pregraphs on $n$ vertices there is a unique \CQ up to isomorphism.

First we repeat the definition of a few families of graphs that were already defined in \cite{pregraphs} and also define some new families.

Given a pregraph $P$, a {\em ladder} in $P$ is a maximal subgraph of $P$ that is isomorphic to the graph cartesian product of $K_2$ and the path $P_n$ with $n$ vertices for some $n\geq 2$.

A {\em prism} is a graph that is isomorphic to the graph cartesian product of $K_2$ and $C_n$ for some $n>2$.

A {\em M\"obius ladder} is a graph that is isomorphic to the graph on vertices $0,\dots,2n-1$ 
with vertex $i$ adjacent to $(i-1)\mbox{ mod }2n$, ~$(i+1)\mbox{ mod }2n$ ~and ~$(i+n)\mbox{ mod }2n$ where $n>1$.

A {\em crown} is a cycle with each vertex additionally incident to one semi-edge.

A \emph{barbed path} is a path where the end points are additionally incident to two semi-edges, and all other vertices are additionally incident to one semi-edge. See Figure~\ref{fig:path_multiple_CQ} for an example.

\begin{figure}
\begin{center}
\ifx \tikzscale \undefined
   \def \tikzscale {1}
\fi%
\begin{tikzpicture}[scale=\tikzscale,baseline=-2]

\pgfmathsetmacro\vertexscale{0.5*\tikzscale}
\pgfmathsetmacro\dotscale{0.25*\tikzscale}

\useasboundingbox (-0.5,-1) rectangle (5.5,1);

\node [vertex,scale=\dotscale] (dot1) at (2.25,0) {};
\node [vertex,scale=\dotscale] (dot2) at (2.5,0) {};
\node [vertex,scale=\dotscale] (dot3) at (2.75,0) {};

\node [fill,circle, scale=\vertexscale] (a) at (0,0) {};
\node [fill,circle, scale=\vertexscale] (b) at (1,0) {};
\node [fill,circle, scale=\vertexscale] (c) at (2,0) {};
\node [fill,circle, scale=\vertexscale] (d) at (3,0) {};
\node [fill,circle, scale=\vertexscale] (e) at (4,0) {};
\node [fill,circle, scale=\vertexscale] (f) at (5,0) {};
\draw [thick, black] (a) -- ++(120:0.5);
\draw [thick, black] (a) -- ++(240:0.5);
\draw [thick, black] (a) to (b);
\draw [thick, black] (b) to (c);
\draw [thick, black] (b)  -- ++(90:0.5);
\draw [thick, black] (c)  -- ++(90:0.5);
\draw [thick, black] (d) to (e);
\draw [thick, black] (d)  -- ++(90:0.5);
\draw [thick, black] (e)  -- ++(90:0.5);
\draw [thick, black] (e) to (f);
\draw [thick, black] (f) -- ++(60:0.5);
\draw [thick, black] (f) -- ++(300:0.5);

\end{tikzpicture}
\end{center}

\caption{A barbed path has two different \CQs when its order is even and two isomorphic \CQs otherwise.}\label{fig:path_multiple_CQ}
\end{figure}
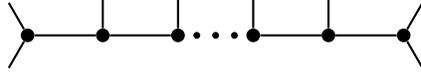

A \emph{double-closed ladder} is a graph isomorphic to the graph cartesian product of $K_2$ and the path $P_n$ with $n$ vertices for some $n\geq 2$, with the vertices corresponding to the same end point of the path additionally connected by an extra edge. See Figure~\ref{fig:ladders_multiple_CQ} for an example.

A \emph{double-open ladder} is a graph isomorphic to the graph cartesian product of $K_2$ and the path $P_n$ with $n$ vertices for some $n\geq 2$, with each vertex corresponding to an end point of the path additionally incident to a semi-edge. See Figure~\ref{fig:ladders_multiple_CQ} for an example.

An \emph{open-closed ladder} is a graph isomorphic to the graph cartesian product of $K_2$ and the path $P_n$ with $n$ vertices for some $n\geq 2$, with the vertices corresponding to one end point of the path additionally connected by an extra edge, and the vertices corresponding to the other end point additionally incident to a semi-edge. See Figure~\ref{fig:ladders_multiple_CQ} for an example.

\begin{figure}
\begin{center}
\ifx \tikzscale \undefined
   \def \tikzscale {1}
\fi%
\tikzset{%
  sigma1style/.prefix style={thick,red},%
  connectorstyle/.prefix style={thick,red,densely dotted},%
  connectormarkstyle/.prefix style={thick,red},%
  unset2factorstyle/.prefix style={very thick,blue,densely dashed}%
}

\begin{tikzpicture}[baseline=(current bounding box.center),scale=\tikzscale]
\useasboundingbox (-5,-1) rectangle (4,1);

\pgfmathsetmacro\vertexscale{0.5*\tikzscale}
\pgfmathsetmacro\dotscale{0.25*\tikzscale}

\node [vertex,scale=\dotscale] (dot1) at (-0.75,0) {};
\node [vertex,scale=\dotscale] (dot2) at (-0.5,0) {};
\node [vertex,scale=\dotscale] (dot3) at (-0.25,0) {};
\node [vertex,scale=\vertexscale] (a) at (0,0.5) {};
\node [vertex,scale=\vertexscale] (b) at (0,-0.5) {};
\node [vertex,scale=\vertexscale] (c) at (1,0.5) {};
\node [vertex,scale=\vertexscale] (d) at (1,-0.5) {};
\node [vertex,scale=\vertexscale] (e) at (2,0.5) {};
\node [vertex,scale=\vertexscale] (f) at (2,-0.5) {};
\node [vertex,scale=\vertexscale] (g) at (3,0.5) {};
\node [vertex,scale=\vertexscale] (h) at (3,-0.5) {};

\node [vertex,scale=\vertexscale] (aLeft) at (-1,0.5) {};
\node [vertex,scale=\vertexscale] (bLeft) at (-1,-0.5) {};
\node [vertex,scale=\vertexscale] (cLeft) at (-2,0.5) {};
\node [vertex,scale=\vertexscale] (dLeft) at (-2,-0.5) {};
\node [vertex,scale=\vertexscale] (eLeft) at (-3,0.5) {};
\node [vertex,scale=\vertexscale] (fLeft) at (-3,-0.5) {};
\node [vertex,scale=\vertexscale] (gLeft) at (-4,0.5) {};
\node [vertex,scale=\vertexscale] (hLeft) at (-4,-0.5) {};

\draw [thick] (a) to (b);
\draw [thick] (a) to (c);
\draw [thick] (b) to (d);
\draw [thick] (c) to (d);
\draw [thick] (e) to (f);
\draw [thick] (c) to (e);
\draw [thick] (d) to (f);
\draw [thick] (e) to (g);
\draw [thick] (f) to (h);
\draw [thick,bend left] (g) to (h);
\draw [thick,bend left] (h) to (g);

\draw [thick] (aLeft) to (bLeft);
\draw [thick] (aLeft) to (cLeft);
\draw [thick] (bLeft) to (dLeft);
\draw [thick] (cLeft) to (dLeft);
\draw [thick] (eLeft) to (fLeft);
\draw [thick] (cLeft) to (eLeft);
\draw [thick] (dLeft) to (fLeft);
\draw [thick] (eLeft) to (gLeft);
\draw [thick] (fLeft) to (hLeft);
\draw [thick,bend left] (gLeft) to (hLeft);
\draw [thick,bend left] (hLeft) to (gLeft);

\end{tikzpicture}

\ifx \tikzscale \undefined
   \def \tikzscale {1}
\fi%
\tikzset{%
  sigma1style/.prefix style={thick,red},%
  connectorstyle/.prefix style={thick,red,densely dotted},%
  connectormarkstyle/.prefix style={thick,red},%
  unset2factorstyle/.prefix style={very thick,blue,densely dashed}%
}

\begin{tikzpicture}[baseline=(current bounding box.center),scale=\tikzscale]
\useasboundingbox (-5,-1) rectangle (4,1);

\pgfmathsetmacro\vertexscale{0.5*\tikzscale}
\pgfmathsetmacro\dotscale{0.25*\tikzscale}

\node [vertex,scale=\dotscale] (dot1) at (-0.75,0) {};
\node [vertex,scale=\dotscale] (dot2) at (-0.5,0) {};
\node [vertex,scale=\dotscale] (dot3) at (-0.25,0) {};
\node [vertex,scale=\vertexscale] (a) at (0,0.5) {};
\node [vertex,scale=\vertexscale] (b) at (0,-0.5) {};
\node [vertex,scale=\vertexscale] (c) at (1,0.5) {};
\node [vertex,scale=\vertexscale] (d) at (1,-0.5) {};
\node [vertex,scale=\vertexscale] (e) at (2,0.5) {};
\node [vertex,scale=\vertexscale] (f) at (2,-0.5) {};
\node [vertex,scale=\vertexscale] (g) at (3,0.5) {};
\node [vertex,scale=\vertexscale] (h) at (3,-0.5) {};

\node [vertex,scale=\vertexscale] (aLeft) at (-1,0.5) {};
\node [vertex,scale=\vertexscale] (bLeft) at (-1,-0.5) {};
\node [vertex,scale=\vertexscale] (cLeft) at (-2,0.5) {};
\node [vertex,scale=\vertexscale] (dLeft) at (-2,-0.5) {};
\node [vertex,scale=\vertexscale] (eLeft) at (-3,0.5) {};
\node [vertex,scale=\vertexscale] (fLeft) at (-3,-0.5) {};
\node [vertex,scale=\vertexscale] (gLeft) at (-4,0.5) {};
\node [vertex,scale=\vertexscale] (hLeft) at (-4,-0.5) {};

\draw [thick] (a) to (b);
\draw [thick] (a) to (c);
\draw [thick] (b) to (d);
\draw [thick] (c) to (d);
\draw [thick] (e) to (f);
\draw [thick] (c) to (e);
\draw [thick] (d) to (f);
\draw [thick] (e) to (g);
\draw [thick] (f) to (h);
\draw [thick] (g) to (h);
\draw [thick] (g) -- ++(45:0.5);
\draw [thick] (h) -- ++(-45:0.5);

\draw [thick] (aLeft) to (bLeft);
\draw [thick] (aLeft) to (cLeft);
\draw [thick] (bLeft) to (dLeft);
\draw [thick] (cLeft) to (dLeft);
\draw [thick] (eLeft) to (fLeft);
\draw [thick] (cLeft) to (eLeft);
\draw [thick] (dLeft) to (fLeft);
\draw [thick] (eLeft) to (gLeft);
\draw [thick] (fLeft) to (hLeft);
\draw [thick] (gLeft) to (hLeft);
\draw [thick] (gLeft) -- ++(135:0.5);
\draw [thick] (hLeft) -- ++(-135:0.5);

\end{tikzpicture}

\ifx \tikzscale \undefined
   \def \tikzscale {1}
\fi%
\tikzset{%
  sigma1style/.prefix style={thick,red},%
  connectorstyle/.prefix style={thick,red,densely dotted},%
  connectormarkstyle/.prefix style={thick,red},%
  unset2factorstyle/.prefix style={very thick,blue,densely dashed}%
}

\begin{tikzpicture}[baseline=(current bounding box.center),scale=\tikzscale]
\useasboundingbox (-5,-1) rectangle (4,1);

\pgfmathsetmacro\vertexscale{0.5*\tikzscale}
\pgfmathsetmacro\dotscale{0.25*\tikzscale}

\node [vertex,scale=\dotscale] (dot1) at (-0.75,0) {};
\node [vertex,scale=\dotscale] (dot2) at (-0.5,0) {};
\node [vertex,scale=\dotscale] (dot3) at (-0.25,0) {};
\node [vertex,scale=\vertexscale] (a) at (0,0.5) {};
\node [vertex,scale=\vertexscale] (b) at (0,-0.5) {};
\node [vertex,scale=\vertexscale] (c) at (1,0.5) {};
\node [vertex,scale=\vertexscale] (d) at (1,-0.5) {};
\node [vertex,scale=\vertexscale] (e) at (2,0.5) {};
\node [vertex,scale=\vertexscale] (f) at (2,-0.5) {};
\node [vertex,scale=\vertexscale] (g) at (3,0.5) {};
\node [vertex,scale=\vertexscale] (h) at (3,-0.5) {};

\node [vertex,scale=\vertexscale] (aLeft) at (-1,0.5) {};
\node [vertex,scale=\vertexscale] (bLeft) at (-1,-0.5) {};
\node [vertex,scale=\vertexscale] (cLeft) at (-2,0.5) {};
\node [vertex,scale=\vertexscale] (dLeft) at (-2,-0.5) {};
\node [vertex,scale=\vertexscale] (eLeft) at (-3,0.5) {};
\node [vertex,scale=\vertexscale] (fLeft) at (-3,-0.5) {};
\node [vertex,scale=\vertexscale] (gLeft) at (-4,0.5) {};
\node [vertex,scale=\vertexscale] (hLeft) at (-4,-0.5) {};

\draw [thick] (a) to (b);
\draw [thick] (a) to (c);
\draw [thick] (b) to (d);
\draw [thick] (c) to (d);
\draw [thick] (e) to (f);
\draw [thick] (c) to (e);
\draw [thick] (d) to (f);
\draw [thick] (e) to (g);
\draw [thick] (f) to (h);
\draw [thick] (g) to (h);
\draw [thick] (g) -- ++(45:0.5);
\draw [thick] (h) -- ++(-45:0.5);

\draw [thick] (aLeft) to (bLeft);
\draw [thick] (aLeft) to (cLeft);
\draw [thick] (bLeft) to (dLeft);
\draw [thick] (cLeft) to (dLeft);
\draw [thick] (eLeft) to (fLeft);
\draw [thick] (cLeft) to (eLeft);
\draw [thick] (dLeft) to (fLeft);
\draw [thick] (eLeft) to (gLeft);
\draw [thick] (fLeft) to (hLeft);
\draw [thick,bend left] (gLeft) to (hLeft);
\draw [thick,bend left] (hLeft) to (gLeft);

\end{tikzpicture}
\end{center}

\caption{The three families of graphs containing ladders that have multiple non-isomorphic \CQsp. From top to bottom: a double-closed ladder, a double-open ladder and an open-closed ladder.}\label{fig:ladders_multiple_CQ}
\end{figure}
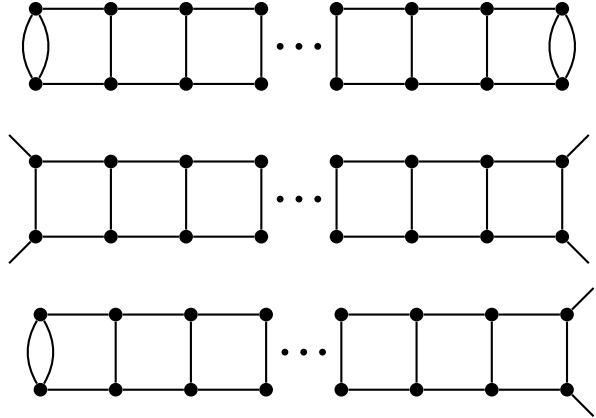

\begin{lemma}\label{lem:unique_partition}
A cubic pregraph $P$ on at least 4 vertices has a unique partition into ladders, subgraphs induced by digons not contained in a ladder and the components of the subgraph induced by the complement of these.
\end{lemma}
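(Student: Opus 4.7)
The plan is to show that the recipe (take all maximal ladders, then all digons not contained in any ladder, then the components of what remains) produces a well-defined partition and that the three types of pieces are canonically determined by $P$. I will interpret the partition as a partition of the vertex set of $P$ into the vertex sets of the listed subgraphs.

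The central claim is that any two maximal ladders in $P$ have the same vertex set or are vertex-disjoint. The engine is local rigidity: in a ladder isomorphic to $K_2 \square P_n$, each internal vertex $v$ has a uniquely distinguished rung among its three incident edges, namely the edge $vw$ such that the remaining two neighbors of $v$ and the remaining two neighbors of $w$ are paired by edges, forming the two $C_4$s flanking $vw$. If two maximal ladders $L_1$ and $L_2$ share an interior vertex $v$ at which this rung is uniquely determined, the rungs at $v$ coincide, the rails at $v$ coincide, and propagating outward along both ladders forces $L_1 = L_2$. The case where $v$ is an endpoint of one or both of the ladders is handled analogously, with the maximality hypothesis ruling out a proper sub-ladder situation.

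The main obstacle is the analysis of symmetric pieces in which local rigidity can fail, namely prisms $K_2 \square C_n$ and M\"obius ladders, in which a vertex has more than one edge that locally qualifies as a rung. For these I would argue globally: any two maximal ladders inside such a piece occupy the same vertex set (they are related by a canonical rotation of the structure), so the vertex-set partition is still well-defined with the whole symmetric piece forming one block. The other exceptional families introduced just before the lemma (double-closed, double-open, and open-closed ladders, barbed paths, and crowns) do not cause ambiguity at this level, since each one terminates in semi-edges or in an extra digon that pins down the endpoints of the ladder it contains.

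Finally, once the maximal ladders are fixed, the digons not contained in any ladder are exactly those whose two endpoints both lie outside every ladder vertex set; in a cubic pregraph on at least four vertices each vertex supports at most one digon, so these form pairwise vertex-disjoint pieces, disjoint from the ladders. The remaining vertices induce a subgraph whose connected components give the last type of block, and the partition is complete. The whole argument is essentially local except for the case analysis of the symmetric pieces, which is where the real technical work lies.
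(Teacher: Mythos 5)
Your overall strategy is the same as the paper's: the paper's entire proof is the two-sentence assertion that, because $P$ is cubic, two ladders (respectively two digons) have empty intersection, and that the three part types are mutually exclusive by definition. What you have written is essentially an attempted justification of that first assertion, so the evaluation comes down to whether your central claim --- that two maximal ladders have the same vertex set or are vertex-disjoint --- is actually established.

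It is not, and the gap sits exactly where you locate ``the real technical work''. Your local-rigidity argument is anchored at an \emph{internal} vertex of a ladder, but a ladder with $n=2$ is a single $C_4$ and has no internal vertices, and the endpoint case is not ``analogous'': maximality does not rule out two maximal $C_4$-ladders sharing a path of length two. Concretely, take $K_{2,3}$ with parts $\{a,c\}$ and $\{b,d,e\}$ and attach one semi-edge to each of $b$, $d$ and $e$. This is a connected cubic pregraph on five vertices; its three $4$-cycles are pairwise overlapping maximal ladders with three distinct vertex sets, none of the symmetric exceptional families you list occurs, and the partition of the lemma is genuinely ambiguous (choosing the ladder on $\{a,b,c,d\}$ leaves $\{e\}$ as the remainder, while choosing the one on $\{a,b,c,e\}$ leaves $\{d\}$). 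To be fair, the paper's own proof asserts the disjointness of ladders with no more justification than you give, and the lemma is only ever applied to pregraphs admitting a $C_4^q$-factor, which the example above does not; but a proof along your lines must either settle the case of overlapping short ladders --- for instance by showing that two $4$-cycles sharing two edges force a configuration incompatible with the hypotheses under which the lemma is actually used --- or explicitly restrict the statement. As written, the step ``handled analogously, with the maximality hypothesis ruling out a proper sub-ladder situation'' is where the proof breaks.
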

\begin{proof}
Since $P$ is cubic, the intersection of two ladders in $P$, respectively of two digons in $P$, is empty. The definition of the partition does not allow a vertex to be in two different types of parts of the partition. This proves the lemma.
\end{proof}

\begin{theorem}\label{thm:number_CQmarked}
For each integer $n>0$ the number of \CQmarkable pregraphs on $n$ vertices that have multiple pairwise non-isomorphic \CQsp, depends only on $n\bmod4$:
\begin{itemize}
\item $n\bmod 2 \equiv1$: 0 graphs
\item $n\bmod 4 \equiv0$: 4 graphs (the barbed path on $n$ vertices, the double-closed ladder on $n$ vertices, the double-open ladder on $n$ vertices and the open-closed ladder on $n$ vertices)
\item $n\bmod 4 \equiv2$: 2 graphs (the barbed path on $n$ vertices and the open-closed ladder on $n$ vertices)
\end{itemize}
Each \CQmarkable pregraph has at most two non-isomorphic \CQsp.
\end{theorem}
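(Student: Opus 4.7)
The plan is to suppose $P$ admits two distinct \CQs $F_1$ and $F_2$ and to analyse their symmetric difference $D = F_1 \triangle F_2$. Since $F_1$ and $F_2$ are $2$-factors of the cubic pregraph $P$, at every vertex $v$ we have $\deg_{F_1}(v) = \deg_{F_2}(v) = 2$ while $\deg_P(v) = 3$, so $F_1$ and $F_2$ must share at least one half-edge at $v$; consequently every vertex of $P$ has degree $0$ or $2$ in $D$, and the edges along each component of $D$ alternate between $F_1 \setminus F_2$ and $F_2 \setminus F_1$. Each component of $D$ is therefore either an even alternating cycle built from ordinary edges or an alternating path whose two extremities are semi-edges. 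Moreover, at every vertex $v$ on $D$, the third, common edge at $v$ belongs to $F_1 \cap F_2$, and it is this common edge that completes $v$'s quotient-of-$C_4$ component in each of $F_1$ and $F_2$; since $q_1$, $q_2$, $q_3$ and $q_4$ have at most $4$ vertices, this local constraint strongly restricts which small component of each $F_i$ contains $v$.

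Using this local picture I would carry out a case analysis on the nature of the common edges along $D$. At each vertex $v$ on $D$ the unique common edge at $v$ is either an ordinary edge to another vertex of $P$, or a semi-edge at $v$. In the first case, the common edges pair the vertices of $D$ into rungs and give $P$ a ladder-like structure in which the components of $D$ play the role of the outer boundary; at each ``turning point'' of $D$ the local quotient-of-$C_4$ component forces one of three end-conditions, namely a $q_2$ component (a digon, giving a closed end), a $q_3$ component (two semi-edges, giving an open end), or no turn at all (if $D$ closes back on itself cyclically). Enumerating the possible combinations of end-conditions identifies $P$ as a double-closed ladder (two closed ends), a double-open ladder (two open ends), an open-closed ladder (one of each type), or a prism or M\"obius ladder (no ends); the last two are excluded because a ladder-shift is then an automorphism of $P$ and so the two alternating decompositions produce isomorphic \CQs. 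In the second case, each vertex on $D$ has a common semi-edge, and the only configuration consistent with the quotient-of-$C_4$ components on both sides identifies $P$ as a barbed path.

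Finally, for each of the four remaining families I would verify by a direct parity check that both alternating decompositions actually exist as valid \CQs: the pairing of middle rungs (or of middle vertices in the barbed-path case) into $q_1$ (respectively $q_3$) components tiles the ladder or path only when $n$ has the residue modulo $4$ specified in the statement, yielding the stated counts. The at-most-two claim follows from the observation that the forced ladder or path structure admits only two globally consistent alternating decompositions, so a hypothetical third \CQp $F_3$ would produce $F_1 \triangle F_3$ with the same skeleton and therefore coincide with $F_1$ or $F_2$ up to isomorphism. The main obstacle is the detailed local-to-global argument in the cycle case, where one must rule out small alternating cycles hidden inside a larger pregraph by showing that the common-edge structure around $D$ propagates across $P$ and saturates the connected pregraph, leaving no room for unchanged components outside $D$.
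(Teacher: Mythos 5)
Your strategy---analysing the symmetric difference $D=F_1\triangle F_2$ of two \CQs and reading off the local structure from the common edge at each $D$-vertex---is genuinely different from the paper's proof, which never compares two factors directly but instead uses the unique partition of $P$ into ladders, digons not in ladders, and the rest (Lemma~\ref{lem:unique_partition}) and shows that outside a short list of exceptional shapes every edge is forced into or out of any \CQp. Your route is viable and arguably more systematic, but as written it has two concrete gaps. First, your second case is incomplete: when every vertex of a component of $D$ has a semi-edge as its common edge, that component can be an alternating \emph{cycle} as well as an alternating path, and the cycle configuration is exactly the crown (a cycle with a semi-edge at each vertex), not the barbed path. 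The crown on even $n\geq 4$ really does have two distinct \CQs (the two matchings of the cycle, each completed by the semi-edges into $q_3$'s), so your claim that ``the only configuration consistent\ldots{} identifies $P$ as a barbed path'' is false as stated; you must identify the crown and then argue, as the paper does, that its two factors are isomorphic under a rotation, so it does not enter the count. Relatedly, the dichotomy ``all common edges ordinary'' versus ``all common edges semi-edges'' needs an explicit propagation argument (the shared $F_i$-component across a $D$-edge is a $q_1$/$q_2$ forcing ordinary common edges at both ends, or a $q_3$ forcing semi-edges at both ends), and it only holds per component of $D$; degenerate components such as a single vertex carrying two interchangeable semi-edges also occur and produce isomorphic factors without any ladder structure, so they must be disposed of separately.

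Second, your justification of the ``at most two'' claim is not correct as stated: it is false that the forced structures admit ``only two globally consistent alternating decompositions.'' The M\"obius ladder on $4$ vertices, the cube (the prism on $8$ vertices) and the single vertex with three semi-edges each admit \emph{three} distinct \CQsp; the theorem survives only because these are pairwise isomorphic, which the paper checks explicitly. So the final step cannot be an argument about the number of decompositions; it must be an argument about isomorphism classes of decompositions, and for each exceptional family (crown, prism, M\"obius ladder, and the four families in the statement) you need to exhibit the automorphism of $P$ identifying factors, or its absence, exactly as the paper does with the end-swapping flip for $n\equiv 2\pmod 4$ versus $n\equiv 0\pmod 4$. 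Your parity computation for the four listed families is the right idea and matches the paper's; it is the classification of which additional graphs have multiple (necessarily isomorphic) factors that your sketch currently leaves open.
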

\begin{proof}
It is easily verified that the \CQ is unique for all \CQmarkable pregraphs on 1 and 3 vertices and that there are exactly 2 \CQmarkable pregraphs on 2 vertices that have 2 non-isomorphic \CQsp.

A crown on $n\geq 4$ vertices is \CQmarkable if $n$ is even. A \CQmarkable crown has two isomorphic \CQsp, corresponding to the two 1-factors of the cycle.

A M\"obius ladder on $n\geq 4$ vertices is \CQmarkable if $n$ is divisible by four. A M\"obius ladder on four vertices has three isomorphic \CQsp. A \CQmarkable M\"obius ladder on more than four vertices has two isomorphic \CQsp.

A prism on $n\geq 4$ vertices is \CQmarkable if $n$ is divisible by four. A prism on four vertices does not exist. A prism on eight vertices is a cube. It is easily seen that a cube has three isomorphic \CQsp: each \CQ corresponds to the edges of two opposite `faces' of the cube when viewed as a solid. A \CQmarkable prism on $n>8$ vertices has two isomorphic \CQsp.

Let $P$ be a pregraph on $n\geq4$ vertices which has a \CQ and is not a crown, M\"obius ladder or prism.
We will show that, except in a few cases, there is only one \CQ in $P$.
Owing to Lemma~\ref{lem:unique_partition}, $P$ has a unique partition into ladders, subgraphs induced by digons not contained in ladders and the components of the subgraph induced by the complement of these.

If $P$ contains a digon that is not contained in a ladder, then at least one of the two vertices $x$ and $y$ of the digon is incident to an edge $e$ that is not a semi-edge and that is not contained in the digon.
Since $e$ is not contained in a $C_4$ or a digon, and at least one of the vertices of $e$ is not incident to a semi-edge, $e$ can not be part of a quotient of $C_4$, and so the original digon is part of the \CQp.

If $P$ contains a ladder, then there are two possible situations.
The first case is that there is a boundary vertex $x$ of the ladder (a vertex not contained in an edge that is the intersection of two 4-cycles) that is incident to an edge $e$ that is not a semi-edge and is not contained in a $C_4$.
Using a similar argumentation as with the digon, it is easy to see that $e$ cannot be contained in a quotient of $C_4$.
This means that the $C_4$ which contains $x$ is part of any \CQ of $P$.
This also fixes any \CQ in this ladder.
The second case is that the ladder contains all the vertices of $P$, but $P$ is not a prism or a M\"obius ladder.
In this case we can look at the boundary vertices to determine the possible \CQsp.
Assume first that there are two boundary vertices which are connected by an edge $e$ which is not contained in a $C_4$ or a digon.
Again $e$ cannot be contained in a quotient of $C_4$ and this implies that also in this case the \CQ is unique.
There are still 3 graphs containing ladders which we have not discussed.
These graphs are the double-closed ladder, the double-open ladder and the open-closed ladder.

Let us first consider the double-closed ladder.
The edges at one side contained in a digon are in two ways contained in a quotient of $C_4$: either the digon itself is the quotient, or one of the edges of the digon together with the rest of the $C_4$ in which it is contained is the quotient.
In both cases the rest of the \CQ is unique for the whole graph.
In case $n\bmod 4 \equiv0$ this means that there are two non-isomorphic \CQmarked pregraphs, and in case $n\bmod 4 \equiv2$ there are two isomorphic \CQmarked pregraphs.

Next we look at the double-open ladder.
Again there are two ways the edges at one side can be contained in a \CQ and also in this case this means that there are two non-isomorphic \CQmarked pregraphs when $n\bmod 4 \equiv0$ and two isomorphic \CQmarked pregraphs when $n\bmod 4 \equiv2$.

Finally we have the open-closed ladder.
Again the edges at one side contained in a digon are in two ways contained in a quotient of $C_4$, but here these two \CQs are non-isomorphic  for all $n$.

The \CQ is fixed in ladders and in digons that are not part of a ladder.
The last step is to fix the \CQ in the rest of $P$.
By definition a vertex in this remainder is not contained in a digon or a $C_4$.
So for any \CQ of $P$, all vertices in the remainder will be in components isomorphic to $q_3$ or $q_4$.
This implies that each vertex in the remainder is incident to at least one semi-edge.
Since $P$ is not a crown, we find that this remainder consists of paths where each vertex is additionally also incident to at least one semi-edge.
First assume that an end vertex $x$ of such a path is incident to exactly one semi-edge.
As we saw earlier, this implies that in $P$ the vertex $x$ is incident to an edge $e$ that has a non-empty intersection with a digon or a ladder, and thus $e$ cannot be contained in a quotient of $C_4$.
This again fixes the \CQ for the whole path and we find that there is a unique \CQ for $P$ in this case.

So assume now that both end vertices are incident to exactly two semi-edges.
This is only possible if $P$ is a barbed path. If we look at one of the end vertices of $P$ in this case, we see that this vertex is contained in a quotient of $C_4$ in two ways: either the two semi-edges are the quotient, or one of the two semi-edges, the third edge and the semi-edge at the neighbouring vertex are the quotient.
Either choice fixes the \CQ for the whole graph.
In case $n$ is even, this means that there are two non-isomorphic \CQs in this pregraph, and in case $n$ is odd, both \CQs are isomorphic.
\end{proof}

Due to the previous theorem we can easily modify a generation algorithm for \CQmarked pregraphs to also generate pregraphs which have a \CQp. We just need to output the underlying graphs and make sure that we correctly handle the small number of graphs which lead to isomorphic unmarked pregraphs.

So let us look at how we can generate \CQmarked pregraphs.
We start by refining the unique partition into subgraphs induced by ladders, subgraphs induced by digons not contained in ladders and the components of the subgraph induced by the complement of these. For \CQmarked pregraphs, we can refine this partition such that each part contains only one type of quotients of $C_4$ (see Figure~\ref{fig:c4-quotients}).

\begin{definition}
Given a \CQmarked pregraph $P$, a \emph{block partition} of $P$ is a partition of $P$ into subgraphs of the following types:
\begin{enumerate}
\item maximal ladders containing only marked quotients of type $q_1$;
\item maximal subgraphs induced by marked quotients of type $q_2$;
\item maximal subgraphs induced by marked quotients of type $q_3$;
\item marked quotients of type $q_4$.
\end{enumerate}
Such a partition is unique for each \CQmarked pregraph $P$.

The different subgraphs in a block partition are called \emph{blocks}.
\end{definition}

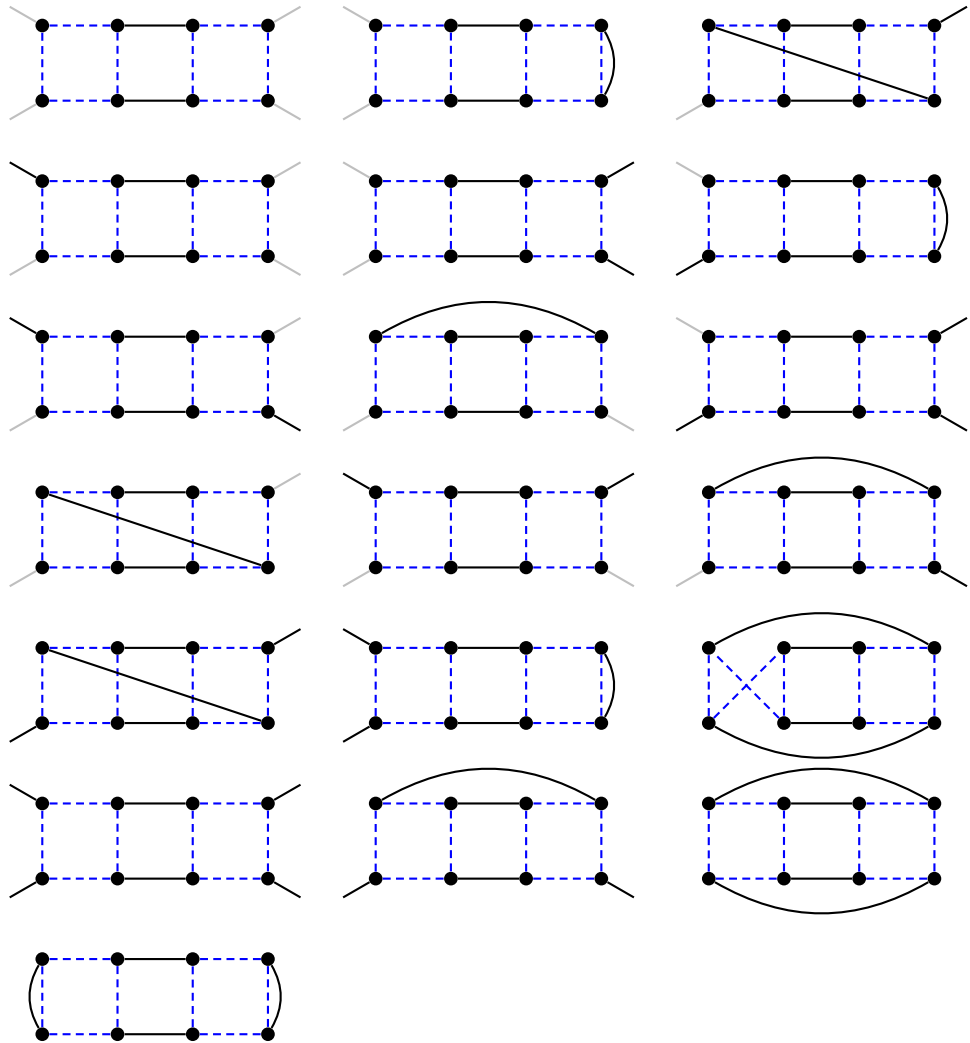
\begin{figure}
\begin{center}
\begin{minipage}[t]{0.3\textwidth}
\ifx \tikzscale \undefined
   \def \tikzscale {1}
\fi%
\ifx \blockparameter \undefined
   \def \blockparameter {2}%
\fi%
\tikzset{%
  sigma1style/.prefix style={thick,black},%
  connectorstyle/.prefix style={thick,gray!50},%
  unset2factorstyle/.prefix style={thick,blue,densely dashed}%
}
\begin{tikzpicture}[scale=\tikzscale,baseline=0.5]

\pgfmathtruncatemacro\blockparameterminus{\blockparameter-1}
\pgfmathsetmacro\vertexscale{0.5*\tikzscale}
\pgfmathsetmacro\bbRightX{2*\blockparameter-0.5}

\useasboundingbox (-0.5,-0.5) rectangle (\bbRightX,1.5);

\foreach \i in {1,...,\blockparameter}{%
   \pgfmathsetmacro\leftX{(\i-1)*2}
   \pgfmathsetmacro\rightX{(\i-1)*2+1}
   \node [fill,circle, scale=\vertexscale] (a\i) at (\leftX,0) {};
   \node [fill,circle, scale=\vertexscale] (b\i) at (\rightX,0) {};
   \node [fill,circle, scale=\vertexscale] (c\i) at (\leftX,1) {};
   \node [fill,circle, scale=\vertexscale] (d\i) at (\rightX,1) {};
   \draw [unset2factorstyle] (a\i) to (b\i);
   \draw [unset2factorstyle] (a\i) to (c\i);
   \draw [unset2factorstyle] (d\i) to (b\i);
   \draw [unset2factorstyle] (d\i) to (c\i);
}
\ifnum\blockparameter>1
\foreach \i in {1,...,\blockparameterminus}{%
   \pgfmathtruncatemacro\iNext{\i+1}
   \draw [sigma1style] (b\i) to (a\iNext);
   \draw [sigma1style] (d\i) to (c\iNext);
}
\fi
\draw [connectorstyle] (a1) -- ++(210:0.5);
\draw [connectorstyle] (c1) -- ++(150:0.5);
\draw [connectorstyle] (b\blockparameter) -- ++(330:0.5);
\draw [connectorstyle] (d\blockparameter) -- ++(30:0.5);
\end{tikzpicture}

\ifx \tikzscale \undefined
   \def \tikzscale {1}
\fi%
\ifx \blockparameter \undefined
   \def \blockparameter {2}%
\fi%
\tikzset{%
  sigma1style/.prefix style={thick,black},%
  connectorstyle/.prefix style={thick,gray!50},%
  unset2factorstyle/.prefix style={thick,blue,densely dashed}%
}
\begin{tikzpicture}[scale=\tikzscale,baseline=0.5]

\pgfmathtruncatemacro\blockparameterminus{\blockparameter-1}
\pgfmathsetmacro\vertexscale{0.5*\tikzscale}
\pgfmathsetmacro\bbRightX{2*\blockparameter-0.5}

\useasboundingbox (-0.5,-0.5) rectangle (\bbRightX,1.5);

\foreach \i in {1,...,\blockparameter}{%
   \pgfmathsetmacro\leftX{(\i-1)*2}
   \pgfmathsetmacro\rightX{(\i-1)*2+1}
   \node [fill,circle, scale=\vertexscale] (a\i) at (\leftX,0) {};
   \node [fill,circle, scale=\vertexscale] (b\i) at (\rightX,0) {};
   \node [fill,circle, scale=\vertexscale] (c\i) at (\leftX,1) {};
   \node [fill,circle, scale=\vertexscale] (d\i) at (\rightX,1) {};
   \draw [unset2factorstyle] (a\i) to (b\i);
   \draw [unset2factorstyle] (a\i) to (c\i);
   \draw [unset2factorstyle] (d\i) to (b\i);
   \draw [unset2factorstyle] (d\i) to (c\i);
}
\ifnum\blockparameter>1
\foreach \i in {1,...,\blockparameterminus}{%
   \pgfmathtruncatemacro\iNext{\i+1}
   \draw [sigma1style] (b\i) to (a\iNext);
   \draw [sigma1style] (d\i) to (c\iNext);
}
\fi
\draw [connectorstyle] (a1) -- ++(210:0.5);
\draw [sigma1style] (c1) -- ++(150:0.5);
\draw [connectorstyle] (b\blockparameter) -- ++(330:0.5);
\draw [connectorstyle] (d\blockparameter) -- ++(30:0.5);

\end{tikzpicture}

\ifx \tikzscale \undefined
   \def \tikzscale {1}
\fi%
\ifx \blockparameter \undefined
   \def \blockparameter {2}%
\fi%
\tikzset{%
  sigma1style/.prefix style={thick,black},%
  connectorstyle/.prefix style={thick,gray!50},%
  unset2factorstyle/.prefix style={thick,blue,densely dashed}%
}
\begin{tikzpicture}[scale=\tikzscale,baseline=0.5]
\pgfmathtruncatemacro\blockparameterminus{\blockparameter-1}
\pgfmathsetmacro\vertexscale{0.5*\tikzscale}
\pgfmathsetmacro\bbRightX{2*\blockparameter-0.5}

\useasboundingbox (-0.5,-0.5) rectangle (\bbRightX,1.5);

\foreach \i in {1,...,\blockparameter}{%
   \pgfmathsetmacro\leftX{(\i-1)*2}
   \pgfmathsetmacro\rightX{(\i-1)*2+1}
   \node [fill,circle, scale=\vertexscale] (a\i) at (\leftX,0) {};
   \node [fill,circle, scale=\vertexscale] (b\i) at (\rightX,0) {};
   \node [fill,circle, scale=\vertexscale] (c\i) at (\leftX,1) {};
   \node [fill,circle, scale=\vertexscale] (d\i) at (\rightX,1) {};
   \draw [unset2factorstyle] (a\i) to (b\i);
   \draw [unset2factorstyle] (a\i) to (c\i);
   \draw [unset2factorstyle] (d\i) to (b\i);
   \draw [unset2factorstyle] (d\i) to (c\i);
}
\ifnum\blockparameter>1
\foreach \i in {1,...,\blockparameterminus}{%
   \pgfmathtruncatemacro\iNext{\i+1}
   \draw [sigma1style] (b\i) to (a\iNext);
   \draw [sigma1style] (d\i) to (c\iNext);
}
\fi
\draw [connectorstyle] (a1) -- ++(210:0.5);
\draw [sigma1style] (c1) -- ++(150:0.5);
\draw [sigma1style] (b\blockparameter) -- ++(330:0.5);
\draw [connectorstyle] (d\blockparameter) -- ++(30:0.5);

\end{tikzpicture}

\ifx \tikzscale \undefined
   \def \tikzscale {1}
\fi%
\ifx \blockparameter \undefined
   \def \blockparameter {2}%
\fi%
\tikzset{%
  sigma1style/.prefix style={thick,black},%
  connectorstyle/.prefix style={thick,gray!50},%
  unset2factorstyle/.prefix style={thick,blue,densely dashed}%
}
\begin{tikzpicture}[scale=\tikzscale,baseline=0.5]
\pgfmathtruncatemacro\blockparameterminus{\blockparameter-1}
\pgfmathsetmacro\vertexscale{0.5*\tikzscale}
\pgfmathsetmacro\bbRightX{2*\blockparameter-0.5}

\useasboundingbox (-0.5,-0.5) rectangle (\bbRightX,1.5);

\foreach \i in {1,...,\blockparameter}{%
   \pgfmathsetmacro\leftX{(\i-1)*2}
   \pgfmathsetmacro\rightX{(\i-1)*2+1}
   \node [fill,circle, scale=\vertexscale] (a\i) at (\leftX,0) {};
   \node [fill,circle, scale=\vertexscale] (b\i) at (\rightX,0) {};
   \node [fill,circle, scale=\vertexscale] (c\i) at (\leftX,1) {};
   \node [fill,circle, scale=\vertexscale] (d\i) at (\rightX,1) {};
   \draw [unset2factorstyle] (a\i) to (b\i);
   \draw [unset2factorstyle] (a\i) to (c\i);
   \draw [unset2factorstyle] (d\i) to (b\i);
   \draw [unset2factorstyle] (d\i) to (c\i);
}
\ifnum\blockparameter>1
\foreach \i in {1,...,\blockparameterminus}{%
   \pgfmathtruncatemacro\iNext{\i+1}
   \draw [sigma1style] (b\i) to (a\iNext);
   \draw [sigma1style] (d\i) to (c\iNext);
}
\fi
\draw [connectorstyle] (a1) -- ++(210:0.5);
\draw [connectorstyle] (d\blockparameter) -- ++(30:0.5);

\draw [sigma1style] (c1) -- (b\blockparameter);

\end{tikzpicture}
\end{minipage}\hfill
\begin{minipage}[t]{0.3\textwidth}
\ifx \tikzscale \undefined
   \def \tikzscale {1}
\fi%
\ifx \blockparameter \undefined
   \def \blockparameter {2}%
\fi%
\tikzset{%
  sigma1style/.prefix style={thick,black},%
  connectorstyle/.prefix style={thick,gray!50},%
  unset2factorstyle/.prefix style={thick,blue,densely dashed}%
}
\begin{tikzpicture}[scale=\tikzscale,baseline=0.5]

\pgfmathtruncatemacro\blockparameterminus{\blockparameter-1}
\pgfmathsetmacro\vertexscale{0.5*\tikzscale}
\pgfmathsetmacro\bbRightX{2*\blockparameter-0.5}

\useasboundingbox (-0.5,-0.5) rectangle (\bbRightX,1.5);

\foreach \i in {1,...,\blockparameter}{%
   \pgfmathsetmacro\leftX{(\i-1)*2}
   \pgfmathsetmacro\rightX{(\i-1)*2+1}
   \node [fill,circle, scale=\vertexscale] (a\i) at (\leftX,0) {};
   \node [fill,circle, scale=\vertexscale] (b\i) at (\rightX,0) {};
   \node [fill,circle, scale=\vertexscale] (c\i) at (\leftX,1) {};
   \node [fill,circle, scale=\vertexscale] (d\i) at (\rightX,1) {};
   \draw [unset2factorstyle] (a\i) to (b\i);
   \draw [unset2factorstyle] (a\i) to (c\i);
   \draw [unset2factorstyle] (d\i) to (b\i);
   \draw [unset2factorstyle] (d\i) to (c\i);
}
\ifnum\blockparameter>1
\foreach \i in {1,...,\blockparameterminus}{%
   \pgfmathtruncatemacro\iNext{\i+1}
   \draw [sigma1style] (b\i) to (a\iNext);
   \draw [sigma1style] (d\i) to (c\iNext);
}
\fi
\draw [connectorstyle] (a1) -- ++(210:0.5);
\draw [connectorstyle] (c1) -- ++(150:0.5);
\draw [sigma1style,bend right] (b\blockparameter) to (d\blockparameter);

\end{tikzpicture}

\ifx \tikzscale \undefined
   \def \tikzscale {1}
\fi%
\ifx \blockparameter \undefined
   \def \blockparameter {2}%
\fi%
\tikzset{%
  sigma1style/.prefix style={thick,black},%
  connectorstyle/.prefix style={thick,gray!50},%
  unset2factorstyle/.prefix style={thick,blue,densely dashed}%
}
\begin{tikzpicture}[scale=\tikzscale,baseline=0.5]

\pgfmathtruncatemacro\blockparameterminus{\blockparameter-1}
\pgfmathsetmacro\vertexscale{0.5*\tikzscale}
\pgfmathsetmacro\bbRightX{2*\blockparameter-0.5}

\useasboundingbox (-0.5,-0.5) rectangle (\bbRightX,1.5);

\foreach \i in {1,...,\blockparameter}{%
   \pgfmathsetmacro\leftX{(\i-1)*2}
   \pgfmathsetmacro\rightX{(\i-1)*2+1}
   \node [fill,circle, scale=\vertexscale] (a\i) at (\leftX,0) {};
   \node [fill,circle, scale=\vertexscale] (b\i) at (\rightX,0) {};
   \node [fill,circle, scale=\vertexscale] (c\i) at (\leftX,1) {};
   \node [fill,circle, scale=\vertexscale] (d\i) at (\rightX,1) {};
   \draw [unset2factorstyle] (a\i) to (b\i);
   \draw [unset2factorstyle] (a\i) to (c\i);
   \draw [unset2factorstyle] (d\i) to (b\i);
   \draw [unset2factorstyle] (d\i) to (c\i);
}
\ifnum\blockparameter>1
\foreach \i in {1,...,\blockparameterminus}{%
   \pgfmathtruncatemacro\iNext{\i+1}
   \draw [sigma1style] (b\i) to (a\iNext);
   \draw [sigma1style] (d\i) to (c\iNext);
}
\fi
\draw [connectorstyle] (a1) -- ++(210:0.5);
\draw [connectorstyle] (c1) -- ++(150:0.5);
\draw [sigma1style] (b\blockparameter) -- ++(330:0.5);
\draw [sigma1style] (d\blockparameter) -- ++(30:0.5);
\end{tikzpicture}

\ifx \tikzscale \undefined
   \def \tikzscale {1}
\fi%
\ifx \blockparameter \undefined
   \def \blockparameter {2}%
\fi%
\tikzset{%
  sigma1style/.prefix style={thick,black},%
  connectorstyle/.prefix style={thick,gray!50},%
  unset2factorstyle/.prefix style={thick,blue,densely dashed}%
}
\begin{tikzpicture}[scale=\tikzscale,baseline=0.5]

\pgfmathtruncatemacro\blockparameterminus{\blockparameter-1}
\pgfmathsetmacro\vertexscale{0.5*\tikzscale}
\pgfmathsetmacro\bbRightX{2*\blockparameter-0.5}

\useasboundingbox (-0.5,-0.5) rectangle (\bbRightX,1.5);

\foreach \i in {1,...,\blockparameter}{%
   \pgfmathsetmacro\leftX{(\i-1)*2}
   \pgfmathsetmacro\rightX{(\i-1)*2+1}
   \node [fill,circle, scale=\vertexscale] (a\i) at (\leftX,0) {};
   \node [fill,circle, scale=\vertexscale] (b\i) at (\rightX,0) {};
   \node [fill,circle, scale=\vertexscale] (c\i) at (\leftX,1) {};
   \node [fill,circle, scale=\vertexscale] (d\i) at (\rightX,1) {};
   \draw [unset2factorstyle] (a\i) to (b\i);
   \draw [unset2factorstyle] (a\i) to (c\i);
   \draw [unset2factorstyle] (d\i) to (b\i);
   \draw [unset2factorstyle] (d\i) to (c\i);
}
\ifnum\blockparameter>1
\foreach \i in {1,...,\blockparameterminus}{%
   \pgfmathtruncatemacro\iNext{\i+1}
   \draw [sigma1style] (b\i) to (a\iNext);
   \draw [sigma1style] (d\i) to (c\iNext);
}
\fi
\draw [connectorstyle] (a1) -- ++(210:0.5);
\draw [connectorstyle] (b\blockparameter) -- ++(330:0.5);

\draw [sigma1style, bend left=30] (c1) to (d\blockparameter);

\end{tikzpicture}

\ifx \tikzscale \undefined
   \def \tikzscale {1}
\fi%
\ifx \blockparameter \undefined
   \def \blockparameter {2}%
\fi%
\tikzset{%
  sigma1style/.prefix style={thick,black},%
  connectorstyle/.prefix style={thick,gray!50},%
  unset2factorstyle/.prefix style={thick,blue,densely dashed}%
}
\begin{tikzpicture}[scale=\tikzscale,baseline=0.5]

\pgfmathtruncatemacro\blockparameterminus{\blockparameter-1}
\pgfmathsetmacro\vertexscale{0.5*\tikzscale}
\pgfmathsetmacro\bbRightX{2*\blockparameter-0.5}

\useasboundingbox (-0.5,-0.5) rectangle (\bbRightX,1.5);

\foreach \i in {1,...,\blockparameter}{%
   \pgfmathsetmacro\leftX{(\i-1)*2}
   \pgfmathsetmacro\rightX{(\i-1)*2+1}
   \node [fill,circle, scale=\vertexscale] (a\i) at (\leftX,0) {};
   \node [fill,circle, scale=\vertexscale] (b\i) at (\rightX,0) {};
   \node [fill,circle, scale=\vertexscale] (c\i) at (\leftX,1) {};
   \node [fill,circle, scale=\vertexscale] (d\i) at (\rightX,1) {};
   \draw [unset2factorstyle] (a\i) to (b\i);
   \draw [unset2factorstyle] (a\i) to (c\i);
   \draw [unset2factorstyle] (d\i) to (b\i);
   \draw [unset2factorstyle] (d\i) to (c\i);
}
\ifnum\blockparameter>1
\foreach \i in {1,...,\blockparameterminus}{%
   \pgfmathtruncatemacro\iNext{\i+1}
   \draw [sigma1style] (b\i) to (a\iNext);
   \draw [sigma1style] (d\i) to (c\iNext);
}
\fi
\draw [connectorstyle] (a1) -- ++(210:0.5);
\draw [sigma1style] (c1) -- ++(150:0.5);
\draw [connectorstyle] (b\blockparameter) -- ++(330:0.5);
\draw [sigma1style] (d\blockparameter) -- ++(30:0.5);

\end{tikzpicture}
\end{minipage}\hfill
\begin{minipage}[t]{0.3\textwidth}
\ifx \tikzscale \undefined
   \def \tikzscale {1}
\fi%
\ifx \blockparameter \undefined
   \def \blockparameter {2}%
\fi%
\tikzset{%
  sigma1style/.prefix style={thick,black},%
  connectorstyle/.prefix style={thick,gray!50},%
  unset2factorstyle/.prefix style={thick,blue,densely dashed}%
}
\begin{tikzpicture}[scale=\tikzscale,baseline=0.5]
\pgfmathtruncatemacro\blockparameterminus{\blockparameter-1}
\pgfmathsetmacro\vertexscale{0.5*\tikzscale}
\pgfmathsetmacro\bbRightX{2*\blockparameter-0.5}

\useasboundingbox (-0.5,-0.5) rectangle (\bbRightX,1.5);

\foreach \i in {1,...,\blockparameter}{%
   \pgfmathsetmacro\leftX{(\i-1)*2}
   \pgfmathsetmacro\rightX{(\i-1)*2+1}
   \node [fill,circle, scale=\vertexscale] (a\i) at (\leftX,0) {};
   \node [fill,circle, scale=\vertexscale] (b\i) at (\rightX,0) {};
   \node [fill,circle, scale=\vertexscale] (c\i) at (\leftX,1) {};
   \node [fill,circle, scale=\vertexscale] (d\i) at (\rightX,1) {};
   \draw [unset2factorstyle] (a\i) to (b\i);
   \draw [unset2factorstyle] (a\i) to (c\i);
   \draw [unset2factorstyle] (d\i) to (b\i);
   \draw [unset2factorstyle] (d\i) to (c\i);
}
\ifnum\blockparameter>1
\foreach \i in {1,...,\blockparameterminus}{%
   \pgfmathtruncatemacro\iNext{\i+1}
   \draw [sigma1style] (b\i) to (a\iNext);
   \draw [sigma1style] (d\i) to (c\iNext);
}
\fi
\draw [connectorstyle] (a1) -- ++(210:0.5);
\draw [sigma1style] (d\blockparameter) -- ++(30:0.5);

\draw [sigma1style] (c1) -- (b\blockparameter);

\end{tikzpicture}

\ifx \tikzscale \undefined
   \def \tikzscale {1}
\fi%
\ifx \blockparameter \undefined
   \def \blockparameter {2}%
\fi%
\tikzset{%
  sigma1style/.prefix style={thick,black},%
  connectorstyle/.prefix style={thick,gray!50},%
  unset2factorstyle/.prefix style={thick,blue,densely dashed}%
}
\begin{tikzpicture}[scale=\tikzscale,baseline=0.5]

\pgfmathtruncatemacro\blockparameterminus{\blockparameter-1}
\pgfmathsetmacro\vertexscale{0.5*\tikzscale}
\pgfmathsetmacro\bbRightX{2*\blockparameter-0.5}

\useasboundingbox (-0.5,-0.5) rectangle (\bbRightX,1.5);

\foreach \i in {1,...,\blockparameter}{%
   \pgfmathsetmacro\leftX{(\i-1)*2}
   \pgfmathsetmacro\rightX{(\i-1)*2+1}
   \node [fill,circle, scale=\vertexscale] (a\i) at (\leftX,0) {};
   \node [fill,circle, scale=\vertexscale] (b\i) at (\rightX,0) {};
   \node [fill,circle, scale=\vertexscale] (c\i) at (\leftX,1) {};
   \node [fill,circle, scale=\vertexscale] (d\i) at (\rightX,1) {};
   \draw [unset2factorstyle] (a\i) to (b\i);
   \draw [unset2factorstyle] (a\i) to (c\i);
   \draw [unset2factorstyle] (d\i) to (b\i);
   \draw [unset2factorstyle] (d\i) to (c\i);
}
\ifnum\blockparameter>1
\foreach \i in {1,...,\blockparameterminus}{%
   \pgfmathtruncatemacro\iNext{\i+1}
   \draw [sigma1style] (b\i) to (a\iNext);
   \draw [sigma1style] (d\i) to (c\iNext);
}
\fi
\draw [sigma1style] (a1) -- ++(210:0.5);
\draw [connectorstyle] (c1) -- ++(150:0.5);
\draw [sigma1style,bend right] (b\blockparameter) to (d\blockparameter);

\end{tikzpicture}

\ifx \tikzscale \undefined
   \def \tikzscale {1}
\fi%
\ifx \blockparameter \undefined
   \def \blockparameter {2}%
\fi%
\tikzset{%
  sigma1style/.prefix style={thick,black},%
  connectorstyle/.prefix style={thick,gray!50},%
  unset2factorstyle/.prefix style={thick,blue,densely dashed}%
}
\begin{tikzpicture}[scale=\tikzscale,baseline=0.5]

\pgfmathtruncatemacro\blockparameterminus{\blockparameter-1}
\pgfmathsetmacro\vertexscale{0.5*\tikzscale}
\pgfmathsetmacro\bbRightX{2*\blockparameter-0.5}

\useasboundingbox (-0.5,-0.5) rectangle (\bbRightX,1.5);

\foreach \i in {1,...,\blockparameter}{%
   \pgfmathsetmacro\leftX{(\i-1)*2}
   \pgfmathsetmacro\rightX{(\i-1)*2+1}
   \node [fill,circle, scale=\vertexscale] (a\i) at (\leftX,0) {};
   \node [fill,circle, scale=\vertexscale] (b\i) at (\rightX,0) {};
   \node [fill,circle, scale=\vertexscale] (c\i) at (\leftX,1) {};
   \node [fill,circle, scale=\vertexscale] (d\i) at (\rightX,1) {};
   \draw [unset2factorstyle] (a\i) to (b\i);
   \draw [unset2factorstyle] (a\i) to (c\i);
   \draw [unset2factorstyle] (d\i) to (b\i);
   \draw [unset2factorstyle] (d\i) to (c\i);
}
\ifnum\blockparameter>1
\foreach \i in {1,...,\blockparameterminus}{%
   \pgfmathtruncatemacro\iNext{\i+1}
   \draw [sigma1style] (b\i) to (a\iNext);
   \draw [sigma1style] (d\i) to (c\iNext);
}
\fi
\draw [sigma1style] (a1) -- ++(210:0.5);
\draw [connectorstyle] (c1) -- ++(150:0.5);
\draw [sigma1style] (b\blockparameter) -- ++(330:0.5);
\draw [sigma1style] (d\blockparameter) -- ++(30:0.5);
\end{tikzpicture}

\ifx \tikzscale \undefined
   \def \tikzscale {1}
\fi%
\ifx \blockparameter \undefined
   \def \blockparameter {2}%
\fi%
\tikzset{%
  sigma1style/.prefix style={thick,black},%
  connectorstyle/.prefix style={thick,gray!50},%
  unset2factorstyle/.prefix style={thick,blue,densely dashed}%
}
\begin{tikzpicture}[scale=\tikzscale,baseline=0.5]

\pgfmathtruncatemacro\blockparameterminus{\blockparameter-1}
\pgfmathsetmacro\vertexscale{0.5*\tikzscale}
\pgfmathsetmacro\bbRightX{2*\blockparameter-0.5}

\useasboundingbox (-0.5,-0.5) rectangle (\bbRightX,1.5);

\foreach \i in {1,...,\blockparameter}{%
   \pgfmathsetmacro\leftX{(\i-1)*2}
   \pgfmathsetmacro\rightX{(\i-1)*2+1}
   \node [fill,circle, scale=\vertexscale] (a\i) at (\leftX,0) {};
   \node [fill,circle, scale=\vertexscale] (b\i) at (\rightX,0) {};
   \node [fill,circle, scale=\vertexscale] (c\i) at (\leftX,1) {};
   \node [fill,circle, scale=\vertexscale] (d\i) at (\rightX,1) {};
   \draw [unset2factorstyle] (a\i) to (b\i);
   \draw [unset2factorstyle] (a\i) to (c\i);
   \draw [unset2factorstyle] (d\i) to (b\i);
   \draw [unset2factorstyle] (d\i) to (c\i);
}
\ifnum\blockparameter>1
\foreach \i in {1,...,\blockparameterminus}{%
   \pgfmathtruncatemacro\iNext{\i+1}
   \draw [sigma1style] (b\i) to (a\iNext);
   \draw [sigma1style] (d\i) to (c\iNext);
}
\fi
\draw [connectorstyle] (a1) -- ++(210:0.5);
\draw [sigma1style] (b\blockparameter) -- ++(330:0.5);

\draw [sigma1style, bend left=30] (c1) to (d\blockparameter);
\end{tikzpicture}
\end{minipage}

\begin{minipage}[t]{0.3\textwidth}
\ifx \tikzscale \undefined
   \def \tikzscale {1}
\fi%
\ifx \blockparameter \undefined
   \def \blockparameter {2}%
\fi%
\tikzset{%
  sigma1style/.prefix style={thick,black},%
  connectorstyle/.prefix style={thick,gray!50},%
  unset2factorstyle/.prefix style={thick,blue,densely dashed}%
}
\begin{tikzpicture}[scale=\tikzscale,baseline=0.5]
\pgfmathtruncatemacro\blockparameterminus{\blockparameter-1}
\pgfmathsetmacro\vertexscale{0.5*\tikzscale}
\pgfmathsetmacro\bbRightX{2*\blockparameter-0.5}

\useasboundingbox (-0.5,-0.5) rectangle (\bbRightX,1.5);

\foreach \i in {1,...,\blockparameter}{%
   \pgfmathsetmacro\leftX{(\i-1)*2}
   \pgfmathsetmacro\rightX{(\i-1)*2+1}
   \node [fill,circle, scale=\vertexscale] (a\i) at (\leftX,0) {};
   \node [fill,circle, scale=\vertexscale] (b\i) at (\rightX,0) {};
   \node [fill,circle, scale=\vertexscale] (c\i) at (\leftX,1) {};
   \node [fill,circle, scale=\vertexscale] (d\i) at (\rightX,1) {};
   \draw [unset2factorstyle] (a\i) to (b\i);
   \draw [unset2factorstyle] (a\i) to (c\i);
   \draw [unset2factorstyle] (d\i) to (b\i);
   \draw [unset2factorstyle] (d\i) to (c\i);
}
\ifnum\blockparameter>1
\foreach \i in {1,...,\blockparameterminus}{%
   \pgfmathtruncatemacro\iNext{\i+1}
   \draw [sigma1style] (b\i) to (a\iNext);
   \draw [sigma1style] (d\i) to (c\iNext);
}
\fi
\draw [sigma1style] (a1) -- ++(210:0.5);
\draw [sigma1style] (d\blockparameter) -- ++(30:0.5);

\draw [sigma1style] (c1) -- (b\blockparameter);
\end{tikzpicture}

\ifx \tikzscale \undefined
   \def \tikzscale {1}
\fi%
\ifx \blockparameter \undefined
   \def \blockparameter {2}%
\fi%
\tikzset{%
  sigma1style/.prefix style={thick,black},%
  connectorstyle/.prefix style={thick,gray!50},%
  unset2factorstyle/.prefix style={thick,blue,densely dashed}%
}
\begin{tikzpicture}[scale=\tikzscale,baseline=0.5]

\pgfmathtruncatemacro\blockparameterminus{\blockparameter-1}
\pgfmathsetmacro\vertexscale{0.5*\tikzscale}
\pgfmathsetmacro\bbRightX{2*\blockparameter-0.5}

\useasboundingbox (-0.5,-0.5) rectangle (\bbRightX,1.5);

\foreach \i in {1,...,\blockparameter}{%
   \pgfmathsetmacro\leftX{(\i-1)*2}
   \pgfmathsetmacro\rightX{(\i-1)*2+1}
   \node [fill,circle, scale=\vertexscale] (a\i) at (\leftX,0) {};
   \node [fill,circle, scale=\vertexscale] (b\i) at (\rightX,0) {};
   \node [fill,circle, scale=\vertexscale] (c\i) at (\leftX,1) {};
   \node [fill,circle, scale=\vertexscale] (d\i) at (\rightX,1) {};
   \draw [unset2factorstyle] (a\i) to (b\i);
   \draw [unset2factorstyle] (a\i) to (c\i);
   \draw [unset2factorstyle] (d\i) to (b\i);
   \draw [unset2factorstyle] (d\i) to (c\i);
}
\ifnum\blockparameter>1
\foreach \i in {1,...,\blockparameterminus}{%
   \pgfmathtruncatemacro\iNext{\i+1}
   \draw [sigma1style] (b\i) to (a\iNext);
   \draw [sigma1style] (d\i) to (c\iNext);
}
\fi
\draw [sigma1style] (a1) -- ++(210:0.5);
\draw [sigma1style] (c1) -- ++(150:0.5);
\draw [sigma1style] (b\blockparameter) -- ++(330:0.5);
\draw [sigma1style] (d\blockparameter) -- ++(30:0.5);

\end{tikzpicture}

\ifx \tikzscale \undefined
   \def \tikzscale {1}
\fi%
\ifx \blockparameter \undefined
   \def \blockparameter {2}%
\fi%
\tikzset{%
  sigma1style/.prefix style={thick,black},%
  connectorstyle/.prefix style={thick,gray!50},%
  unset2factorstyle/.prefix style={thick,blue,densely dashed}%
}
\begin{tikzpicture}[scale=\tikzscale,baseline=0.5]

\pgfmathtruncatemacro\blockparameterminus{\blockparameter-1}
\pgfmathsetmacro\vertexscale{0.5*\tikzscale}
\pgfmathsetmacro\bbRightX{2*\blockparameter-0.5}

\useasboundingbox (-0.5,-0.5) rectangle (\bbRightX,1.5);

\foreach \i in {1,...,\blockparameter}{%
   \pgfmathsetmacro\leftX{(\i-1)*2}
   \pgfmathsetmacro\rightX{(\i-1)*2+1}
   \node [fill,circle, scale=\vertexscale] (a\i) at (\leftX,0) {};
   \node [fill,circle, scale=\vertexscale] (b\i) at (\rightX,0) {};
   \node [fill,circle, scale=\vertexscale] (c\i) at (\leftX,1) {};
   \node [fill,circle, scale=\vertexscale] (d\i) at (\rightX,1) {};
   \draw [unset2factorstyle] (a\i) to (b\i);
   \draw [unset2factorstyle] (a\i) to (c\i);
   \draw [unset2factorstyle] (d\i) to (b\i);
   \draw [unset2factorstyle] (d\i) to (c\i);
}
\ifnum\blockparameter>1
\foreach \i in {1,...,\blockparameterminus}{%
   \pgfmathtruncatemacro\iNext{\i+1}
   \draw [sigma1style] (b\i) to (a\iNext);
   \draw [sigma1style] (d\i) to (c\iNext);
}
\fi

\draw [sigma1style,bend left] (a1) to (c1);

\draw [sigma1style,bend right] (b\blockparameter) to (d\blockparameter);

\end{tikzpicture}
\end{minipage}\hfill
\begin{minipage}[t]{0.3\textwidth}
\ifx \tikzscale \undefined
   \def \tikzscale {1}
\fi%
\ifx \blockparameter \undefined
   \def \blockparameter {2}%
\fi%
\tikzset{%
  sigma1style/.prefix style={thick,black},%
  connectorstyle/.prefix style={thick,gray!50},%
  unset2factorstyle/.prefix style={thick,blue,densely dashed}%
}
\begin{tikzpicture}[scale=\tikzscale,baseline=0.5]

\pgfmathtruncatemacro\blockparameterminus{\blockparameter-1}
\pgfmathsetmacro\vertexscale{0.5*\tikzscale}
\pgfmathsetmacro\bbRightX{2*\blockparameter-0.5}

\useasboundingbox (-0.5,-0.5) rectangle (\bbRightX,1.5);

\foreach \i in {1,...,\blockparameter}{%
   \pgfmathsetmacro\leftX{(\i-1)*2}
   \pgfmathsetmacro\rightX{(\i-1)*2+1}
   \node [fill,circle, scale=\vertexscale] (a\i) at (\leftX,0) {};
   \node [fill,circle, scale=\vertexscale] (b\i) at (\rightX,0) {};
   \node [fill,circle, scale=\vertexscale] (c\i) at (\leftX,1) {};
   \node [fill,circle, scale=\vertexscale] (d\i) at (\rightX,1) {};
   \draw [unset2factorstyle] (a\i) to (b\i);
   \draw [unset2factorstyle] (a\i) to (c\i);
   \draw [unset2factorstyle] (d\i) to (b\i);
   \draw [unset2factorstyle] (d\i) to (c\i);
}
\ifnum\blockparameter>1
\foreach \i in {1,...,\blockparameterminus}{%
   \pgfmathtruncatemacro\iNext{\i+1}
   \draw [sigma1style] (b\i) to (a\iNext);
   \draw [sigma1style] (d\i) to (c\iNext);
}
\fi
\draw [sigma1style] (a1) -- ++(210:0.5);
\draw [sigma1style] (c1) -- ++(150:0.5);
\draw [sigma1style,bend right] (b\blockparameter) to (d\blockparameter);

\end{tikzpicture}

\ifx \tikzscale \undefined
   \def \tikzscale {1}
\fi%
\ifx \blockparameter \undefined
   \def \blockparameter {2}%
\fi%
\tikzset{%
  sigma1style/.prefix style={thick,black},%
  connectorstyle/.prefix style={thick,gray!50},%
  unset2factorstyle/.prefix style={thick,blue,densely dashed}%
}
\begin{tikzpicture}[scale=\tikzscale,baseline=0.5]

\pgfmathtruncatemacro\blockparameterminus{\blockparameter-1}
\pgfmathsetmacro\vertexscale{0.5*\tikzscale}
\pgfmathsetmacro\bbRightX{2*\blockparameter-0.5}

\useasboundingbox (-0.5,-0.5) rectangle (\bbRightX,1.5);

\foreach \i in {1,...,\blockparameter}{%
   \pgfmathsetmacro\leftX{(\i-1)*2}
   \pgfmathsetmacro\rightX{(\i-1)*2+1}
   \node [fill,circle, scale=\vertexscale] (a\i) at (\leftX,0) {};
   \node [fill,circle, scale=\vertexscale] (b\i) at (\rightX,0) {};
   \node [fill,circle, scale=\vertexscale] (c\i) at (\leftX,1) {};
   \node [fill,circle, scale=\vertexscale] (d\i) at (\rightX,1) {};
   \draw [unset2factorstyle] (a\i) to (b\i);
   \draw [unset2factorstyle] (a\i) to (c\i);
   \draw [unset2factorstyle] (d\i) to (b\i);
   \draw [unset2factorstyle] (d\i) to (c\i);
}
\ifnum\blockparameter>1
\foreach \i in {1,...,\blockparameterminus}{%
   \pgfmathtruncatemacro\iNext{\i+1}
   \draw [sigma1style] (b\i) to (a\iNext);
   \draw [sigma1style] (d\i) to (c\iNext);
}
\fi
\draw [sigma1style] (a1) -- ++(210:0.5);
\draw [sigma1style] (b\blockparameter) -- ++(330:0.5);

\draw [sigma1style, bend left=30] (c1) to (d\blockparameter);

\end{tikzpicture}
\end{minipage}\hfill
\begin{minipage}[t]{0.3\textwidth}
\ifx \tikzscale \undefined
   \def \tikzscale {1}
\fi%
\ifx \blockparameter \undefined
   \def \blockparameter {2}%
\fi%
\tikzset{%
  sigma1style/.prefix style={thick,black},%
  connectorstyle/.prefix style={thick,gray!50},%
  unset2factorstyle/.prefix style={thick,blue,densely dashed}%
}
\begin{tikzpicture}[scale=\tikzscale,baseline=0.5]

\pgfmathtruncatemacro\blockparameterminus{\blockparameter-1}
\pgfmathsetmacro\vertexscale{0.5*\tikzscale}
\pgfmathsetmacro\bbRightX{2*\blockparameter-0.5}

\useasboundingbox (-0.5,-0.5) rectangle (\bbRightX,1.5);

\foreach \i in {1,...,\blockparameter}{%
   \pgfmathsetmacro\leftX{(\i-1)*2}
   \pgfmathsetmacro\rightX{(\i-1)*2+1}
   \node [fill,circle, scale=\vertexscale] (a\i) at (\leftX,0) {};
   \node [fill,circle, scale=\vertexscale] (b\i) at (\rightX,0) {};
   \node [fill,circle, scale=\vertexscale] (c\i) at (\leftX,1) {};
   \node [fill,circle, scale=\vertexscale] (d\i) at (\rightX,1) {};
   \ifnum\i=1
      \draw [unset2factorstyle] (a\i) to (d\i);
      \draw [unset2factorstyle] (a\i) to (c\i);
      \draw [unset2factorstyle] (d\i) to (b\i);
      \draw [unset2factorstyle] (b\i) to (c\i);
   \else
      \draw [unset2factorstyle] (a\i) to (b\i);
      \draw [unset2factorstyle] (a\i) to (c\i);
      \draw [unset2factorstyle] (d\i) to (b\i);
      \draw [unset2factorstyle] (d\i) to (c\i);
   \fi
}
\ifnum\blockparameter>1
\foreach \i in {1,...,\blockparameterminus}{%
   \pgfmathtruncatemacro\iNext{\i+1}
   \draw [sigma1style] (b\i) to (a\iNext);
   \draw [sigma1style] (d\i) to (c\iNext);
}
\fi

\draw [sigma1style, bend left=30] (c1) to (d\blockparameter);
\draw [sigma1style, bend right=30] (a1) to (b\blockparameter);

\end{tikzpicture}

\ifx \tikzscale \undefined
   \def \tikzscale {1}
\fi%
\ifx \blockparameter \undefined
   \def \blockparameter {2}%
\fi%
\tikzset{%
  sigma1style/.prefix style={thick,black},%
  connectorstyle/.prefix style={thick,gray!50},%
  unset2factorstyle/.prefix style={thick,blue,densely dashed}%
}
\begin{tikzpicture}[scale=\tikzscale,baseline=0.5]

\pgfmathtruncatemacro\blockparameterminus{\blockparameter-1}
\pgfmathsetmacro\vertexscale{0.5*\tikzscale}
\pgfmathsetmacro\bbRightX{2*\blockparameter-0.5}

\useasboundingbox (-0.5,-0.5) rectangle (\bbRightX,1.5);

\foreach \i in {1,...,\blockparameter}{%
   \pgfmathsetmacro\leftX{(\i-1)*2}
   \pgfmathsetmacro\rightX{(\i-1)*2+1}
   \node [fill,circle, scale=\vertexscale] (a\i) at (\leftX,0) {};
   \node [fill,circle, scale=\vertexscale] (b\i) at (\rightX,0) {};
   \node [fill,circle, scale=\vertexscale] (c\i) at (\leftX,1) {};
   \node [fill,circle, scale=\vertexscale] (d\i) at (\rightX,1) {};
   \draw [unset2factorstyle] (a\i) to (b\i);
   \draw [unset2factorstyle] (a\i) to (c\i);
   \draw [unset2factorstyle] (d\i) to (b\i);
   \draw [unset2factorstyle] (d\i) to (c\i);
}
\ifnum\blockparameter>1
\foreach \i in {1,...,\blockparameterminus}{%
   \pgfmathtruncatemacro\iNext{\i+1}
   \draw [sigma1style] (b\i) to (a\iNext);
   \draw [sigma1style] (d\i) to (c\iNext);
}
\fi

\draw [sigma1style, bend left=30] (c1) to (d\blockparameter);
\draw [sigma1style, bend right=30] (a1) to (b\blockparameter);

\end{tikzpicture}
\end{minipage}
\end{center}
\caption{Representatives of all possible maximal ladders containing only marked quotients of type $q_1$. The dashed lines are the marked quotients of type $q_1$ and the grey lines are the edges that go to different blocks.}\label{fig:blocks_q1}
\end{figure}

\begin{figure}
\begin{center}

\begin{minipage}[t]{0.4\textwidth}
\begin{center}
\ifx \tikzscale \undefined
   \def \tikzscale {1}
\fi%
\ifx \blockparameter \undefined
   \def \blockparameter {2}%
\fi%
\tikzset{%
  sigma1style/.prefix style={thick,black},%
  connectorstyle/.prefix style={thick,gray!50},%
  unset2factorstyle/.prefix style={thick,blue,densely dashed}%
}
\begin{tikzpicture}[scale=\tikzscale,baseline=-2]

\pgfmathtruncatemacro\blockparameterminus{\blockparameter-1}
\pgfmathsetmacro\vertexscale{0.5*\tikzscale}
\pgfmathsetmacro\bbRightX{2*\blockparameter}

\useasboundingbox (-0.5,-0.5) rectangle (\bbRightX,1.5);

\foreach \i in {1,...,\blockparameter}{%
   \pgfmathsetmacro\leftX{(\i-1)*2}
   \pgfmathsetmacro\rightX{(\i-1)*2+1}
   \node [fill,circle, scale=\vertexscale] (a\i) at (\leftX,0) {};
   \node [fill,circle, scale=\vertexscale] (b\i) at (\rightX,0) {};
   \draw [unset2factorstyle,bend left] (a\i) to (b\i);
   \draw [unset2factorstyle,bend right] (a\i) to (b\i);
}
\ifnum\blockparameter>1
\foreach \i in {1,...,\blockparameterminus}{%
   \pgfmathtruncatemacro\iNext{\i+1}
   \draw [sigma1style] (b\i) to (a\iNext);
}
\fi
\draw [connectorstyle] (a1) -- ++(180:0.5);
\draw [connectorstyle] (b\blockparameter) -- ++(0:0.5);
\end{tikzpicture}

\ifx \tikzscale \undefined
   \def \tikzscale {1}
\fi%
\ifx \blockparameter \undefined
   \def \blockparameter {2}%
\fi%
\tikzset{%
  sigma1style/.prefix style={thick,black},%
  connectorstyle/.prefix style={thick,gray!50},%
  unset2factorstyle/.prefix style={thick,blue,densely dashed}%
}
\begin{tikzpicture}[scale=\tikzscale,baseline=-2]

\pgfmathtruncatemacro\blockparameterminus{\blockparameter-1}
\pgfmathsetmacro\vertexscale{0.5*\tikzscale}
\pgfmathsetmacro\bbRightX{2*\blockparameter}

\useasboundingbox (-0.5,-0.5) rectangle (\bbRightX,1.5);

\foreach \i in {1,...,\blockparameter}{%
   \pgfmathsetmacro\leftX{(\i-1)*2}
   \pgfmathsetmacro\rightX{(\i-1)*2+1}
   \node [fill,circle, scale=\vertexscale] (a\i) at (\leftX,0) {};
   \node [fill,circle, scale=\vertexscale] (b\i) at (\rightX,0) {};
   \draw [unset2factorstyle,bend left] (a\i) to (b\i);
   \draw [unset2factorstyle,bend right] (a\i) to (b\i);
}
\ifnum\blockparameter>1
\foreach \i in {1,...,\blockparameterminus}{%
   \pgfmathtruncatemacro\iNext{\i+1}
   \draw [sigma1style] (b\i) to (a\iNext);
}
\fi
\draw [sigma1style] (a1) -- ++(180:0.5);
\draw [sigma1style] (b\blockparameter) -- ++(0:0.5);

\end{tikzpicture}
\end{center}
\end{minipage}\hspace{20pt}
\begin{minipage}[t]{0.4\textwidth}
\begin{center}
\ifx \tikzscale \undefined
   \def \tikzscale {1}
\fi%
\ifx \blockparameter \undefined
   \def \blockparameter {2}%
\fi%
\tikzset{%
  sigma1style/.prefix style={thick,black},%
  connectorstyle/.prefix style={thick,gray!50},%
  unset2factorstyle/.prefix style={thick,blue,densely dashed}%
}
\begin{tikzpicture}[scale=\tikzscale,baseline=-2]

\pgfmathtruncatemacro\blockparameterminus{\blockparameter-1}
\pgfmathsetmacro\vertexscale{0.5*\tikzscale}
\pgfmathsetmacro\bbRightX{2*\blockparameter}

\useasboundingbox (-0.5,-0.5) rectangle (\bbRightX,1.5);

\foreach \i in {1,...,\blockparameter}{%
   \pgfmathsetmacro\leftX{(\i-1)*2}
   \pgfmathsetmacro\rightX{(\i-1)*2+1}
   \node [fill,circle, scale=\vertexscale] (a\i) at (\leftX,0) {};
   \node [fill,circle, scale=\vertexscale] (b\i) at (\rightX,0) {};
   \draw [unset2factorstyle,bend left] (a\i) to (b\i);
   \draw [unset2factorstyle,bend right] (a\i) to (b\i);
}
\ifnum\blockparameter>1
\foreach \i in {1,...,\blockparameterminus}{%
   \pgfmathtruncatemacro\iNext{\i+1}
   \draw [sigma1style] (b\i) to (a\iNext);
}
\fi
\draw [connectorstyle] (a1) -- ++(180:0.5);
\draw [sigma1style] (b\blockparameter) -- ++(0:0.5);
\end{tikzpicture}

\def\blockparameter{5}
\ifx \tikzscale \undefined
   \def \tikzscale {1}
\fi%
\ifx \blockparameter \undefined
   \def \blockparameter {2}%
\fi%
\tikzset{%
  sigma1style/.prefix style={thick,black},%
  connectorstyle/.prefix style={thick,gray!50},%
  unset2factorstyle/.prefix style={thick,blue,densely dashed}%
}
\begin{tikzpicture}[scale=\tikzscale,baseline=-2]

\pgfmathsetmacro\vertexscale{0.5*\tikzscale}
\pgfmathsetmacro\necklaceRadius{0.5/sin(90/\blockparameter)}
\pgfmathsetmacro\necklaceAngle{180/\blockparameter}


\foreach \i in {1,...,\blockparameter}{%
   \pgfmathsetmacro\currentAngleA{(2*(\i-1))*\necklaceAngle}
   \pgfmathsetmacro\currentAngleB{(2*(\i-1)+1)*\necklaceAngle}
   \node [fill,circle, scale=\vertexscale] (a\i) at (\currentAngleA:\necklaceRadius) {};
   \node [fill,circle, scale=\vertexscale] (b\i) at (\currentAngleB:\necklaceRadius) {};
   \draw [unset2factorstyle] (a\i) to [bend left] (b\i);
   \draw [unset2factorstyle] (a\i) to [bend right] (b\i);
   \ifnum\i=\blockparameter
      \pgfmathsetmacro\markAngle{-\necklaceAngle/2}
       \draw [sigma1style] (a1) -- (b\blockparameter);
   \fi
   \ifnum\i>1
      \pgfmathtruncatemacro\iPrev{\i-1}
      \pgfmathsetmacro\markAngle{(2*\i-2.5)*\necklaceAngle}
      \draw [sigma1style] (a\i) -- (b\iPrev);
   \fi
}
\end{tikzpicture}
\end{center}
\end{minipage}
\end{center}
\caption{Representatives of all possible maximal subgraphs induced by marked quotients of type $q_2$. The dashed lines are the marked quotients of type $q_2$ and the grey lines are the edges that go to different blocks.}\label{fig:blocks_q2}
\end{figure}

\begin{figure}
\begin{center}

\begin{minipage}[t]{0.4\textwidth}
\begin{center}
\ifx \tikzscale \undefined
   \def \tikzscale {1}
\fi%
\ifx \blockparameter \undefined
   \def \blockparameter {2}%
\fi%
\tikzset{%
  sigma1style/.prefix style={thick,black},%
  connectorstyle/.prefix style={thick,gray!50},%
  unset2factorstyle/.prefix style={thick,blue,densely dashed}%
}
\begin{tikzpicture}[scale=\tikzscale,baseline=-2]

\pgfmathtruncatemacro\blockparameterminus{\blockparameter-1}
\pgfmathsetmacro\vertexscale{0.5*\tikzscale}
\pgfmathsetmacro\bbRightX{2*\blockparameter}

\useasboundingbox (-0.5,-0.5) rectangle (\bbRightX,1.5);

\foreach \i in {1,...,\blockparameter}{%
   \pgfmathsetmacro\leftX{(\i-1)*2}
   \pgfmathsetmacro\rightX{(\i-1)*2+1}
   \node [fill,circle, scale=\vertexscale] (a\i) at (\leftX,0) {};
   \node [fill,circle, scale=\vertexscale] (b\i) at (\rightX,0) {};
   \draw [unset2factorstyle] (a\i) -- ++(90:0.7);
   \draw [unset2factorstyle] (b\i) -- ++(90:0.7);
   \draw [unset2factorstyle] (a\i) to (b\i);
}
\ifnum\blockparameter>1
\foreach \i in {1,...,\blockparameterminus}{%
   \pgfmathtruncatemacro\iNext{\i+1}
   \draw [sigma1style] (b\i) to (a\iNext);
}
\fi
\draw [connectorstyle] (a1) -- ++(180:0.5);
\draw [connectorstyle] (b\blockparameter) -- ++(0:0.5);

\end{tikzpicture}

\ifx \tikzscale \undefined
   \def \tikzscale {1}
\fi%
\ifx \blockparameter \undefined
   \def \blockparameter {2}%
\fi%
\tikzset{%
  sigma1style/.prefix style={thick,black},%
  connectorstyle/.prefix style={thick,gray!50},%
  unset2factorstyle/.prefix style={thick,blue,densely dashed}%
}
\begin{tikzpicture}[scale=\tikzscale,baseline=-2]

\pgfmathtruncatemacro\blockparameterminus{\blockparameter-1}
\pgfmathsetmacro\vertexscale{0.5*\tikzscale}
\pgfmathsetmacro\bbRightX{2*\blockparameter}

\useasboundingbox (-0.5,-0.5) rectangle (\bbRightX,1.5);

\foreach \i in {1,...,\blockparameter}{%
   \pgfmathsetmacro\leftX{(\i-1)*2}
   \pgfmathsetmacro\rightX{(\i-1)*2+1}
   \node [fill,circle, scale=\vertexscale] (a\i) at (\leftX,0) {};
   \node [fill,circle, scale=\vertexscale] (b\i) at (\rightX,0) {};
   \draw [unset2factorstyle] (a\i) -- ++(90:0.7);
   \draw [unset2factorstyle] (b\i) -- ++(90:0.7);
   \draw [unset2factorstyle] (a\i) to (b\i);
}
\ifnum\blockparameter>1
\foreach \i in {1,...,\blockparameterminus}{%
   \pgfmathtruncatemacro\iNext{\i+1}
   \draw [sigma1style] (b\i) to (a\iNext);
}
\fi
\draw [sigma1style] (a1) -- ++(180:0.5);
\draw [sigma1style] (b\blockparameter) -- ++(0:0.5);

\end{tikzpicture}
\end{center}
\end{minipage}\hspace{20pt}
\begin{minipage}[t]{0.4\textwidth}
\begin{center}
\ifx \tikzscale \undefined
   \def \tikzscale {1}
\fi%
\ifx \blockparameter \undefined
   \def \blockparameter {2}%
\fi%
\tikzset{%
  sigma1style/.prefix style={thick,black},%
  connectorstyle/.prefix style={thick,gray!50},%
  unset2factorstyle/.prefix style={thick,blue,densely dashed}%
}
\begin{tikzpicture}[scale=\tikzscale,baseline=-2]

\pgfmathtruncatemacro\blockparameterminus{\blockparameter-1}
\pgfmathsetmacro\vertexscale{0.5*\tikzscale}
\pgfmathsetmacro\bbRightX{2*\blockparameter}

\useasboundingbox (-0.5,-0.5) rectangle (\bbRightX,1.5);

\foreach \i in {1,...,\blockparameter}{%
   \pgfmathsetmacro\leftX{(\i-1)*2}
   \pgfmathsetmacro\rightX{(\i-1)*2+1}
   \node [fill,circle, scale=\vertexscale] (a\i) at (\leftX,0) {};
   \node [fill,circle, scale=\vertexscale] (b\i) at (\rightX,0) {};
   \draw [unset2factorstyle] (a\i) -- ++(90:0.7);
   \draw [unset2factorstyle] (b\i) -- ++(90:0.7);
   \draw [unset2factorstyle] (a\i) to (b\i);
}
\ifnum\blockparameter>1
\foreach \i in {1,...,\blockparameterminus}{%
   \pgfmathtruncatemacro\iNext{\i+1}
   \draw [sigma1style] (b\i) to (a\iNext);
}
\fi
\draw [connectorstyle] (a1) -- ++(180:0.5);
\draw [sigma1style] (b\blockparameter) -- ++(0:0.5);
\end{tikzpicture}

\def\blockparameter{5}
\ifx \tikzscale \undefined
   \def \tikzscale {1}
\fi%
\ifx \blockparameter \undefined
   \def \blockparameter {2}%
\fi%
\tikzset{%
  sigma1style/.prefix style={thick,black},%
  connectorstyle/.prefix style={thick,gray!50},%
  unset2factorstyle/.prefix style={thick,blue,densely dashed}%
}
\begin{tikzpicture}[scale=\tikzscale,baseline=-2]

\pgfmathsetmacro\vertexscale{0.5*\tikzscale}
\pgfmathsetmacro\necklaceRadius{0.5/sin(90/\blockparameter)}
\pgfmathsetmacro\necklaceRadiusLarge{\necklaceRadius+0.7}
\pgfmathsetmacro\necklaceAngle{180/\blockparameter}


\foreach \i in {1,...,\blockparameter}{%
   \pgfmathsetmacro\currentAngleA{(2*(\i-1))*\necklaceAngle}
   \pgfmathsetmacro\currentAngleB{(2*(\i-1)+1)*\necklaceAngle}
   \node [fill,circle, scale=\vertexscale] (a\i) at (\currentAngleA:\necklaceRadius) {};
   \node [fill,circle, scale=\vertexscale] (b\i) at (\currentAngleB:\necklaceRadius) {};
   \draw [unset2factorstyle] (a\i) to (b\i);
   \draw [unset2factorstyle] (a\i) to (\currentAngleA:\necklaceRadiusLarge);
   \draw [unset2factorstyle] (b\i) to (\currentAngleB:\necklaceRadiusLarge);
   \ifnum\i=\blockparameter
      \pgfmathsetmacro\markAngle{-\necklaceAngle/2}
      \draw [sigma1style] (a1) -- (b\blockparameter);
   \fi
   \ifnum\i>1
      \pgfmathtruncatemacro\iPrev{\i-1}
      \pgfmathsetmacro\markAngle{(2*\i-2.5)*\necklaceAngle}
      \draw [sigma1style] (a\i) -- (b\iPrev);
   \fi
}
\end{tikzpicture}
\end{center}
\end{minipage}
\end{center}
\caption{Representatives of all possible maximal subgraphs induced by marked quotients of type $q_3$. The dashed lines are the marked quotients of type $q_3$ and the grey lines are the edges that go to different blocks.}\label{fig:blocks_q3}
\end{figure}

\begin{figure}
\begin{center}
\ifx \tikzscale \undefined
   \def \tikzscale {1}
\fi%
\tikzset{%
  sigma1style/.prefix style={thick,black},%
  connectorstyle/.prefix style={thick,gray!50},%
  unset2factorstyle/.prefix style={thick,blue,densely dashed}%
}
\begin{tikzpicture}[scale=\tikzscale,baseline=-2]

\pgfmathsetmacro\vertexscale{0.5*\tikzscale}

\useasboundingbox (-0.5,-0.5) rectangle (0.5,0.5);

\node [fill,circle, scale=\vertexscale] (v) at (0,0) {};
\draw [unset2factorstyle,bend left] (v) -- (45:0.7);
\draw [unset2factorstyle,bend right] (v) -- (135:0.7);
\draw [connectorstyle] (v) -- ++(270:0.7);
\end{tikzpicture}
\hspace{80pt}
\ifx \tikzscale \undefined
   \def \tikzscale {1}
\fi%
\tikzset{%
  sigma1style/.prefix style={thick,black},%
  connectorstyle/.prefix style={thick,gray!50},%
  unset2factorstyle/.prefix style={thick,blue,densely dashed}%
}
\begin{tikzpicture}[scale=\tikzscale,baseline=-2]

\pgfmathsetmacro\vertexscale{0.5*\tikzscale}

\useasboundingbox (-0.5,-0.5) rectangle (0.5,0.5);

\node [fill,circle, scale=\vertexscale] (v) at (0,0) {};
\draw [unset2factorstyle,bend left] (v) -- (45:0.7);
\draw [unset2factorstyle,bend right] (v) -- (135:0.7);
\draw [sigma1style] (v) -- ++(270:0.7);
\end{tikzpicture}
\end{center}
\caption{All possible subgraphs induced by marked quotients of type $q_4$. The dashed lines are the marked quotients of type $q_4$ and the grey lines are the edges that go to different blocks.}\label{fig:blocks_q4}
\end{figure}
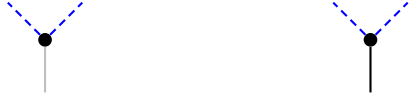

Figure~\ref{fig:blocks_q1} shows representatives of all possible maximal subgraphs induced by marked quotients of type $q_1$. These blocks can be determined by starting from a ladder with an order that is divisible by four and adding edges between the vertices of degree 2 or adding a semi-edge to these vertices. This was done by a straightforward ad-hoc script, since no specialised techniques are needed for this small set of blocks.
Figure~\ref{fig:blocks_q2}, respectively Figure~\ref{fig:blocks_q3}, shows representatives of all possible maximal subgraphs induced by marked quotients of type $q_2$, respectively of type $q_3$.
Figure~\ref{fig:blocks_q4} shows all possible subgraphs induced by marked quotients of type $q_4$.

We use block partitions to generate the \CQmarked pregraphs.
This generation process happens in several phases.
To generate all the \CQmarked pregraphs with $n$ vertices, we start by generating all lists of blocks such that the sum of the orders of the blocks is equal to $n$.
This is done by a simple orderly algorithm without many optimisations since the time spent in the generation of these lists is negligible compared to the rest of the generation process.
Since each \CQmarked pregraph corresponds to exactly one such list of blocks, different lists will result in different \CQmarked pregraphs.

We can perform a few tests to discard lists that are not realisable as a block partition of a \CQmarked pregraph.
We can view a \CQmarked pregraph with a block partition as a multigraph: the vertices of the multigraph are the blocks of the block partition, the edges of the multigraph are the edges between the blocks.
This means that when we have a list of blocks we can check whether the degree sequence that corresponds to that list is realisable as a multigraph without loops.
We use the characterisation by Owens and Trent \cite{OT:67} to check whether a degree sequence is realisable as a loopless multigraph.

Not all lists for which the corresponding degree sequences are realisable will occur as block partition of a \CQmarkable pregraph.
Another restriction we need to take into account when connecting the blocks is that not every connection is valid.
A condition for the unicity of the block partition was that the blocks were maximal.
This means that connecting a $q_2$ block to a $q_2$ block or a $q_3$ block to a $q_3$ block is not allowed.
Neither is it allowed to connect two $q_1$ blocks such that two of the connecting edges are part of a $C_4$.
So we can add the following test to discard some more blocks: if more than half of the connections are connections at $q_2$ blocks or more than half of the connections are connections at $q_3$ blocks, then this list will not be realisable.

A list $L$ of blocks is \emph{acceptable} if and only if
\begin{itemize}
\item the degree sequence corresponding to $L$ is multigraphic,
\item half or less than half of the missing connections lie in $q_2$ blocks, and
\item half or less than half of the missing connections lie in $q_3$ blocks.
\end{itemize}

Once we have a list we need to try and add the connections in all possible ways.

\begin{definition}
A \emph{\pCQmarkedpg} is a not necessarily connected pregraph $P$ in which all the edges of a given \CQ are coloured with colour 0 and all other edges with colour 1 and where all vertices have either degree 2 or degree 3.

The vertices with degree 2 are called the \emph{deficient vertices} of $P$.
\end{definition}

\begin{definition}
Denote by \emph{$B(P)$} the block list corresponding to the unique block partition of $P$. 

Given a block list $L$, denote by \emph{$\mathcal{B}_L$} the set of \pCQmarkedpgs with a block partition isomorphic to $L$.
\end{definition}

A block list corresponds in a trivial way to a \pCQmarkedpgp.
We construct a new \pCQmarkedpg by connecting two deficient vertices.
The new \pCQmarkedpg has two deficient vertices less than the original graph.
Once we have a \pCQmarkedpg with no deficient vertices we have found a \CQmarked pregraph.
To generate the \CQmarked pregraphs we use the principle of closed structures \cite{Br09,Br:13}.

\begin{definition}
A \emph{marked subgraph} of a \pCQmarkedpg $P$ is a subgraph $P_s$ of $P$ such that $P_s$ is a \pCQmarkedpg and the colours of the edges in $P_s$ is the same as the colours of the edges in $P$. 

A \pCQmarkedpg $P'$ is an \emph{extension} of a \pCQmarkedpg $P$ if $P$ is a marked subgraph of $P'$, and $P'$ and $P$ have the same number of vertices.

A \pCQmarkedpg $P$ is \emph{closed} if for any two extensions $P_1$ and $P_2$ of $P$ we have that any isomorphism between $P_1$ and $P_2$ induces an automorphism of $P$.
\end{definition}

\begin{figure}
\begin{center}
\ifx \tikzscale \undefined
   \def \tikzscale {1}
\fi%
\tikzset{%
  connectorstyle/.prefix style={thick,gray!50},%
  unset2factorstyle/.prefix style={thick,blue,densely dashed}%
}
\begin{tikzpicture}[scale=\tikzscale,baseline=-2]

\pgfmathsetmacro\vertexscale{0.5*\tikzscale}


\node [fill,circle, scale=\vertexscale] (s1) at (45:0.5) {};
\node [fill,circle, scale=\vertexscale] (s2) at (135:0.5) {};
\node [fill,circle, scale=\vertexscale] (s3) at (225:0.5) {};
\node [fill,circle, scale=\vertexscale] (s4) at (315:0.5) {};

\draw [unset2factorstyle] (s1) to (s2);
\draw [unset2factorstyle] (s2) to (s3);
\draw [unset2factorstyle] (s3) to (s4);
\draw [unset2factorstyle] (s4) to (s1);

\node [fill,circle, scale=\vertexscale] (pc1) at (135:1.5) {};
\node [fill,circle, scale=\vertexscale] (pc2) at (135:2.5) {};
\draw [unset2factorstyle,bend left] (pc1) to (pc2);
\draw [unset2factorstyle,bend right] (pc1) to (pc2);
\draw [thick,black] (pc2)  -- ++(135:0.5);

\node [fill,circle, scale=\vertexscale] (q1) at (45:1.5) {};
\node [fill,circle, scale=\vertexscale] (q3) at (225:1.5) {};
\node [fill,circle, scale=\vertexscale] (q4) at (315:1.5) {};
   
\draw [unset2factorstyle] (q1) -- ++(0:0.5);
\draw [unset2factorstyle] (q1) -- ++(90:0.5);

\draw [unset2factorstyle] (q3) -- ++(180:0.5);
\draw [unset2factorstyle] (q3) -- ++(270:0.5);
\draw [thick,black] (q3)  -- (s3);

\draw [unset2factorstyle] (q4) -- ++(0:0.5);
\draw [unset2factorstyle] (q4) -- ++(270:0.5);
\end{tikzpicture}
\end{center}
\caption{A \pCQmarkedpg that is not closed.}\label{fig:not_closed_partialpg}
\end{figure}

The \pCQmarkedpg $P$ in Figure~\ref{fig:not_closed_partialpg} is not closed since the two extensions in Figure~\ref{fig:not_closed_partialpg_exts} are isomorphic, but the isomorphism maps the vertex in the bottom right to the vertex in the bottom left and vice versa.
Clearly this does not induce an automorphism of $P$ since these vertices have different degrees in $P$.

\begin{figure}
\begin{center}
{\hfill
\ifx \tikzscale \undefined
   \def \tikzscale {1}
\fi%
\tikzset{%
  connectorstyle/.prefix style={thick,gray!50},%
  unset2factorstyle/.prefix style={thick,blue,densely dashed}%
}
\begin{tikzpicture}[scale=\tikzscale,baseline=-2]

\pgfmathsetmacro\vertexscale{0.5*\tikzscale}


\node [fill,circle, scale=\vertexscale] (s1) at (45:0.5) {};
\node [fill,circle, scale=\vertexscale] (s2) at (135:0.5) {};
\node [fill,circle, scale=\vertexscale] (s3) at (225:0.5) {};
\node [fill,circle, scale=\vertexscale] (s4) at (315:0.5) {};

\draw [unset2factorstyle] (s1) to (s2);
\draw [unset2factorstyle] (s2) to (s3);
\draw [unset2factorstyle] (s3) to (s4);
\draw [unset2factorstyle] (s4) to (s1);

\node [fill,circle, scale=\vertexscale] (pc1) at (135:1.5) {};
\node [fill,circle, scale=\vertexscale] (pc2) at (135:2.5) {};
\draw [unset2factorstyle,bend left] (pc1) to (pc2);
\draw [unset2factorstyle,bend right] (pc1) to (pc2);
\draw [thick,black] (pc2)  -- ++(135:0.5);

\node [fill,circle, scale=\vertexscale] (q1) at (45:1.5) {};
\node [fill,circle, scale=\vertexscale] (q3) at (225:1.5) {};
\node [fill,circle, scale=\vertexscale] (q4) at (315:1.5) {};
   
\draw [unset2factorstyle] (q1) -- ++(0:0.5);
\draw [unset2factorstyle] (q1) -- ++(90:0.5);

\draw [unset2factorstyle] (q3) -- ++(180:0.5);
\draw [unset2factorstyle] (q3) -- ++(270:0.5);
\draw [thick,black] (q3)  -- (s3);

\draw [unset2factorstyle] (q4) -- ++(0:0.5);
\draw [unset2factorstyle] (q4) -- ++(270:0.5);
\draw [thick,black] (q4)  -- (s4);

\draw [thick,black] (pc1)  -- (s2);
\end{tikzpicture}\hfill
\ifx \tikzscale \undefined
   \def \tikzscale {1}
\fi%
\tikzset{%
  connectorstyle/.prefix style={thick,gray!50},%
  unset2factorstyle/.prefix style={thick,blue,densely dashed}%
}
\begin{tikzpicture}[scale=\tikzscale,baseline=-2]

\pgfmathsetmacro\vertexscale{0.5*\tikzscale}


\node [fill,circle, scale=\vertexscale] (s1) at (45:0.5) {};
\node [fill,circle, scale=\vertexscale] (s2) at (135:0.5) {};
\node [fill,circle, scale=\vertexscale] (s3) at (225:0.5) {};
\node [fill,circle, scale=\vertexscale] (s4) at (315:0.5) {};

\draw [unset2factorstyle] (s1) to (s2);
\draw [unset2factorstyle] (s2) to (s3);
\draw [unset2factorstyle] (s3) to (s4);
\draw [unset2factorstyle] (s4) to (s1);

\node [fill,circle, scale=\vertexscale] (pc1) at (135:1.5) {};
\node [fill,circle, scale=\vertexscale] (pc2) at (135:2.5) {};
\draw [unset2factorstyle,bend left] (pc1) to (pc2);
\draw [unset2factorstyle,bend right] (pc1) to (pc2);
\draw [thick,black] (pc2)  -- ++(135:0.5);

\node [fill,circle, scale=\vertexscale] (q1) at (45:1.5) {};
\node [fill,circle, scale=\vertexscale] (q3) at (225:1.5) {};
\node [fill,circle, scale=\vertexscale] (q4) at (315:1.5) {};
   
\draw [unset2factorstyle] (q1) -- ++(0:0.5);
\draw [unset2factorstyle] (q1) -- ++(90:0.5);

\draw [unset2factorstyle] (q3) -- ++(180:0.5);
\draw [unset2factorstyle] (q3) -- ++(270:0.5);
\draw [thick,black] (q3)  -- (s3);

\draw [unset2factorstyle] (q4) -- ++(0:0.5);
\draw [unset2factorstyle] (q4) -- ++(270:0.5);
\draw [thick,black] (q4)  -- (s4);

\draw [thick,black] (pc1)  -- (s1);
\end{tikzpicture}
\hfill { }
}
\end{center}
\caption{Two extensions of the \pCQmarkedpg in Figure~\ref{fig:not_closed_partialpg} which are isomorphic but for which the isomorphism does not induce an automorphism of the original \pCQmarkedpgp.}\label{fig:not_closed_partialpg_exts}
\end{figure}
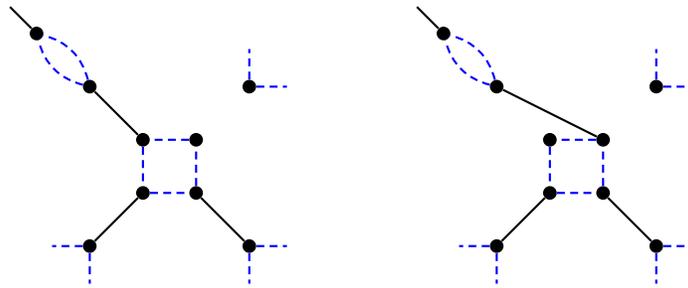

The \pCQmarkedpg corresponding to a block list $L$ is a closed \pCQmarkedpg since any connection that would create a subgraph that is a block leads to a \pCQmarkedpg that is not in $\mathcal{B}_L$.

The advantage of this technique with closed structures is that if the closed \pCQmarkedpg $P$ has a trivial symmetry group, no two different extensions of $P$ will be isomorphic, and so no isomorphism rejection is needed once a closed graph with trivial symmetry appears in the generation process.
Clearly we want to reach such a closed \pCQmarkedpg as soon as possible during the generation.
The following lemma shows a way how a new closed \pCQmarkedpg can be obtained when starting with a closed \pCQmarkedpgp.

\begin{lemma}\label{lem:closedpg}
Let $P$ be a closed \pCQmarkedpgp. Let $O$ be an orbit of deficient vertices under the automorphism group of $P$. Let $P'$ be an extension of $P$ so that no vertex from $O$ is deficient and no edges in $P'\setminus P$ have an empty intersection with $O$. Then $P'$ is also a closed \pCQmarkedpgp.
\end{lemma}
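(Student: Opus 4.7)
The plan is to take any two extensions $P_1'$ and $P_2'$ of $P'$, note that they are also extensions of $P$, and use that $P$ is closed to get an automorphism $\sigma$ of $P$ from a given isomorphism $\phi\colon P_1'\to P_2'$. The task is then to upgrade $\sigma$ to an automorphism of $P'$, i.e., to show that $\sigma$ preserves the set of edges $E(P')\setminus E(P)$ (including the colouring, which is automatic since $\phi$ is an isomorphism of coloured pregraphs).

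The first observation I would record is that $O$ is invariant under $\sigma$, because $O$ is an orbit of the full automorphism group of $P$ and $\sigma\in\mathrm{Aut}(P)$. Next I would exploit the degree condition: by hypothesis every vertex of $O$ has degree $3$ in $P'$, hence has degree $3$ already in the larger partial pregraphs $P_1'$ and $P_2'$. Because degree is capped at $3$ in a partial \CQmarkedpgp, no edge of $P_1'\setminus P'$ or of $P_2'\setminus P'$ can be incident to a vertex of $O$. Consequently, the set of edges of $P_i'\setminus P$ that are incident to $O$ coincides with $E(P')\setminus E(P)$ for $i=1,2$; here I use the lemma's assumption that every edge of $P'\setminus P$ meets $O$.

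With that identification in hand, take any $e\in E(P')\setminus E(P)$ and a vertex $v\in O$ incident to $e$. Then $\phi(e)$ is an edge of $P_2'$ incident to $\phi(v)=\sigma(v)\in O$. If $\phi(e)$ lay in $E(P)$, applying $\sigma^{-1}$ (the restriction of $\phi^{-1}$ to $V(P)$) would put $e$ in $E(P)$, contradicting $e\in E(P')\setminus E(P)$. So $\phi(e)\in E(P_2')\setminus E(P)$, and being incident to $O$ it lies in $E(P')\setminus E(P)$ by the identification from the previous paragraph. Thus $\sigma$ maps $E(P')\setminus E(P)$ into itself, and applying the same argument to $\sigma^{-1}$ gives a bijection, so $\sigma\in\mathrm{Aut}(P')$, proving $P'$ closed.

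The main obstacle is the bookkeeping hidden in the phrase ``incident to $O$'' in a pregraph, where edges can be loops, semi-edges, or parallel; one has to be careful that a single edge can use up two of the three ``degree slots'' at a vertex of $O$, and that the semi-edges introduced by an extension still satisfy the hypothesis that they meet $O$. The argument above only uses that each $v\in O$ reaches its cap of $3$ already in $P'$, so no further edges at $v$ can appear in any extension, and this works uniformly for ordinary edges, loops, multi-edges and semi-edges; making this uniformity explicit is the one place where care is required, but no new idea is needed beyond that.
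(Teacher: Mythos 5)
Your argument is correct and follows essentially the same route as the paper's proof: both rest on the facts that the induced map is an automorphism of $P$ by closedness, that $O$ is invariant under it, that every edge of $P'\setminus P$ meets $O$, and that vertices of $O$ are already saturated (degree 3) in $P'$ so no further extension can add edges at them. The only difference is presentational — you argue directly that the image of each new edge is again a new edge of $P'$, while the paper runs the same reasoning as a proof by contradiction on a single offending adjacency.
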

\begin{proof}
Assume we have two extensions $P_1'$ and $P_2'$ of $P'$.
We need to prove that if there is an isomorphism $\sigma$ between $P_1'$ and $P_2'$, this isomorphism induces an automorphism of $P'$.
Since $P$ is closed, we have that $\sigma$ induces an automorphism of $P$.

Assume that there is an isomorphism $\sigma: P_1' \rightarrow P_2'$ that does not induce an automorphism of $P'$.
So there exist vertices $x,y$ in $P'$, such that $x$ and $y$ are adjacent in $P'$, and $\sigma(x)$ and $\sigma(y)$ are not adjacent in $P'$.
As $P$ is a subgraph of $P'$, it cannot be that $\sigma(x)$ and $\sigma(y)$ are adjacent in $P$, so we have that $x$ and $y$ are non-adjacent in $P$. This means at least one of the two vertices $x$ and $y$ belongs to $O$, so assume that $x\in O$.
As $\sigma$ induces an automorphism on $P$, this means that $\sigma(x)\in O$ and so in $P'$ $\sigma(x)$ is adjacent to another vertex, say $z$.
Since $x$ is adjacent to $y$ in $P_1'$, $\sigma(x)$ is adjacent to $\sigma(y)$ in $P_2'$, but this contradicts that $\sigma(x)$ is not a deficient vertex in $P'$.

So we find that for any two extensions $P_1'$ and $P_2'$, there is no isomorphism between $P_1'$ and $P_2'$ that does not induce an automorphism of $P'$ and thus $P'$ is a closed \pCQmarkedpgp.
\end{proof}

We want to go from closed \pCQmarkedpgs to closed \pCQmarkedpgs as `fast' as possible, that is why we will each time select the smallest orbit of deficient vertices and add connections to that orbit.
Once we find a \pCQmarkedpg with a trivial symmetry, we can stop any isomorphism rejections and just add the remaining connections in all possible ways.
We can even do this a bit sooner: it is sufficient that the symmetry group acts trivially on the deficient vertices.

\begin{theorem}
Let $P$ be a closed \pCQmarkedpgp. If the automorphism group of $P$ acts trivially on the deficient vertices of $P$, then all extensions of $P$ are pairwise non-isomorphic.
\end{theorem}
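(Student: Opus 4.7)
The plan is to exploit the closedness of $P$ together with the hypothesis on deficient vertices to show that any isomorphism between two extensions of $P$ is forced to be the identity on the added edges, which then collapses the two extensions to the same labeled pregraph. Suppose $P_1, P_2$ are extensions of $P$ and $\sigma \colon P_1 \to P_2$ is an isomorphism. Both $P_1$ and $P_2$ share the vertex set of $P$, and since $P$ is closed, $\sigma$ induces an automorphism of $P$; in particular, $\sigma$ restricted to the underlying vertex set is an element of $\mathrm{Aut}(P)$.

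Next I would invoke the hypothesis. Because $\mathrm{Aut}(P)$ acts trivially on the deficient vertices of $P$, the permutation $\sigma$ fixes every deficient vertex pointwise. The key structural observation is then that every edge (or semi-edge) of $P_1 \setminus P$ and of $P_2 \setminus P$ is incident only to deficient vertices of $P$: a deficient vertex has degree $2$ and needs exactly one additional incidence to become cubic, so the new half-edges all lie at deficient vertices, and no vertex that was already cubic in $P$ acquires any new incidence. Therefore $\sigma$ sends every edge (or semi-edge) in $E(P_1) \setminus E(P)$ to one with the same endpoints in $P_2$, with the same colour (the colour is preserved since $\sigma$ is an iso of marked pregraphs). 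Consequently $E(P_1) \setminus E(P) \subseteq E(P_2) \setminus E(P)$, and by symmetry we get equality, so $P_1 = P_2$ as labeled marked pregraphs.

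The remaining concern, and the only place where one has to be a little careful, is the handling of the various objects that can fill a deficient slot, namely ordinary edges to another deficient vertex, double edges, and semi-edges; in each case $\sigma$ fixing the incident deficient vertex (or vertices) and preserving colours pins down the added edge uniquely, so no nontrivial permutation of the added incidences is possible. I expect this bookkeeping to be the only real obstacle, since the rest of the argument is a direct application of the definition of closedness followed by the hypothesis. The conclusion is that two extensions can only be isomorphic if they coincide, and hence distinct extensions of $P$ are pairwise non-isomorphic.
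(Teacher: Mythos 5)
Your proposal is correct and follows essentially the same route as the paper's proof: use closedness to get that $\sigma$ induces an automorphism of $P$, use the trivial action to fix all deficient vertices pointwise, and conclude that every edge added in the extension is fixed, forcing the two extensions to coincide. The extra bookkeeping you mention about semi-edges and double edges is a reasonable refinement but does not change the argument.
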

\begin{proof}
Let $P_1$ and $P_2$ be two different extensions of $P$. Assume that there is an isomorphism $\sigma$ between $P_1$ and $P_2$. Since $P$ is closed, the isomorphism $\sigma$ induces an automorphism of $P$. Let $e$ be an edge in $P_1\setminus P$. Both vertices incident to $e$ are deficient vertices in $P$, and so they are fixed by $\sigma$. This implies that the edge $e$ is fixed by $\sigma$, so we find that all edges in $P_1\setminus P$ are fixed by $\sigma$, which contradicts that $P_1$ and $P_2$ are different extensions.
\end{proof}

In summary, we use the following algorithm to generate the \CQmarked pregraphs with $n$ vertices:

\begin{enumerate}
\item Generate all acceptable lists of blocks such that the sum of the orders of the blocks is $n$.
\item For each list construct the corresponding \pCQmarkedpg $P$ and recursively repeat the following steps :
\begin{enumerate}
\item If $P$ has no deficient vertices: output $P$ and return.
\item Compute the automorphism group of $P$ and compute the orbits of deficient vertices of $P$.
\item If $Aut(P)$ acts trivial on the set of deficient vertices of $P$, then complete $P$ by adding the remaining connections in all possible valid ways and output any complete \CQmarked pregraph obtained this way.
\item \label{alg:pg_orbit_selection} Otherwise choose the smallest orbit $O$ and connect all vertices in $O$ to deficient vertices in all valid ways that give non-isomorphic \pCQmarkedpg and repeat these steps for this new \pCQmarkedpgp.
\end{enumerate}
\end{enumerate}

The fact that this algorithm moves from closed \pCQmarkedpg to closed \pCQmarkedpg is not sufficient to guarantee that no pairwise isomorphic structures are output. Although all extensions of a closed \pCQmarkedpg are pairwise non-isomorphic, it might still be possible that they are isomorphic to extensions of another \pCQmarkedpgp. That this is not the case still needs to be proven.

\begin{definition}
A \pCQmarkedpg $P$ is \emph{strongly closed} in a set $S$ of \pCQmarkedpgsp, if all \pCQmarkedpgs in $S$ that contain a subgraph isomorphic to $P$ are extensions of $P$.
\end{definition}

\begin{lemma}\label{lem:stronglyclosedpg}
Let $L$ be the block list under consideration in Step 2 in the algorithm above.
The \pCQmarkedpgs to which the recursive step is applied, are strongly closed in $\mathcal{B}_L$.
\end{lemma}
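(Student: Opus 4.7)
The plan is to induct on the number of times step~(d) has been applied in building the current \pCQmarkedpg $P$.

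For the base case $P = P_L$, I would observe that every $Q \in \mathcal{B}_L$ has, by definition, a unique block partition isomorphic to $L$; erasing the inter-block edges of $Q$ therefore yields a marked subgraph on all of $V(Q)$ isomorphic to $P_L$. Hence every $Q \in \mathcal{B}_L$ is automatically an extension of $P_L$, so $P_L$ is strongly closed in $\mathcal{B}_L$.

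For the inductive step, suppose $P$ is strongly closed in $\mathcal{B}_L$ and let $P'$ arise from $P$ in step~(d) by saturating an orbit $O$ of deficient vertices of $P$ with a prescribed edge set $E$. Take an arbitrary $Q \in \mathcal{B}_L$ together with an injective marked embedding $\phi\colon P' \hookrightarrow Q$. Restricting $\phi$ to $P$ gives an embedding of $P$ into $Q$, so by the induction hypothesis $Q$ is already an extension of $P$; after identifying vertex sets, $P$ sits as a literal marked subgraph of $Q$.

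The crux will be to invoke closedness of $P$, which it inherits from $P_L$ by iterated application of Lemma~\ref{lem:closedpg}. Writing $\sigma := \phi|_{V(P)}$, both $Q$ and its relabeling $\sigma^{-1}(Q)$ contain $P$ as a marked subgraph---the second because $\sigma(P) = \phi(P) \subseteq \phi(P') \subseteq Q$---so both qualify as extensions of $P$. The map $\sigma\colon \sigma^{-1}(Q) \to Q$ is then an isomorphism between two extensions of $P$, and closedness forces $\sigma$ to induce an automorphism of $P$; in particular $\sigma$ stabilizes the orbit $O$ setwise and $\phi(P) = P$. Consequently $\phi(E) \subseteq E(Q)\setminus E(P)$ is an $\mathrm{Aut}(P)$-translate of $E$ saturating $O$, and $P \cup \phi(E) \cong P'$ sits as a marked subgraph of $Q$ on the full vertex set, so $Q$ is an extension of $P'$, as required.

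The main obstacle is precisely this middle step: an arbitrary embedding of $P'$ into $Q$ need not a priori respect the way the algorithm placed $P$ inside $Q$, and closedness of $P$ is the exact tool that reconciles the two. The delicate points are verifying that $\sigma^{-1}(Q)$ really does qualify as an extension of $P$ (so that closedness applies), and that an $\mathrm{Aut}(P)$-translate $\phi(E)$ of $E$ still witnesses $Q$ as an extension of $P'$ in the sense of the definition.
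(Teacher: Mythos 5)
Your overall strategy is the same as the paper's: induct along the construction, get the base case from the uniqueness of the block partition, and in the inductive step first use strong closedness of $P$ to pin the copy of $P$ inside $Q$ down to $P$ itself, then use closedness of $P$ (inherited via Lemma~\ref{lem:closedpg}) to force the embedding of $P'$ to induce an automorphism of $P$. Up to that point your argument is, if anything, more explicit than the paper's about the distinction between ``contains an isomorphic copy of $P$'' and ``is an extension of $P$''.

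However, the last step is a genuine gap, and you have correctly located it yourself without closing it. Knowing that $\phi(E)\subseteq E(Q)$ is an $\mathrm{Aut}(P)$-translate of $E$ saturating $O$ gives you $P\cup\phi(E)\cong P'$ as a marked subgraph of $Q$, but the definition of extension demands that $P'=P\cup E$ itself be a marked subgraph of $Q$; if the inducing automorphism $\tau$ of $P$ does not stabilize $E$ setwise, then $Q$ may contain $\tau(E)$ and not $E$, and your final sentence does not follow. The paper closes exactly this hole by invoking the algorithmic fact that in step~(d) only pairwise non-isomorphic extensions of $P$ over the orbit $O$ are generated (via the canonical construction path): since $P\cup E$ and $P\cup\phi(E)$ are extensions of $P$ saturating the same orbit and are isomorphic via an isomorphism inducing an automorphism of $P$, they are identified, and the copy of $P'$ found in $Q$ is concluded to be $P'$ itself. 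Without bringing in this canonical-representative property of step~(d) --- which is the one ingredient of the paper's proof absent from yours --- the inductive step does not go through as written.
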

\begin{proof}
Due to Lemma~\ref{lem:closedpg}, the graphs to which the recursive step is applied are closed. It is also clear that the initial \pCQmarkedpg that corresponds to the block list $L$ without connections is strongly closed in the set $\mathcal{B}_L$.

What remains to be proven is that if a \pCQmarkedpg $P$ is strongly closed in $\mathcal{B}_{B(P)}$, $O$ is an orbit of deficient vertices of $P$ under the automorphism group of $P$ and $P'$ is an extension of $P$ such that all vertices in $O$ are no longer deficient and no edges were added that have an empty intersection with $O$, then $P'$ is also strongly closed in $\mathcal{B}_{B(P)}$.

Given a \pCQmarkedpg $P''$ such that $B(P'')=B(P)$ and that $P''$ contains a subgraph $P'_s$ that is isomorphic to $P'$. As $P'$ is an extension of $P$, $P'_s$ also contains a subgraph that is isomorphic to $P$, and thus $P'_s$ is an extension of $P$. Since $P$ is closed, we have that the isomorphism between $P'_s$ and $P'$ induces an automorphism of $P$. Since $O$ is an orbit under the automorphism group of $P$, $O$ is mapped onto $O$. This means that both $P'$ and $P'_s$ are extensions of $P$ and for both \pCQmarkedpgs the same orbit of deficient vertices was chosen in step~\ref{alg:pg_orbit_selection} of the algorithm above. In step~\ref{alg:pg_orbit_selection} only pairwise non-isomorphic \pCQmarkedpgs are generated, so we find that $P'=P'_s$. So we have that $P''$ is an extension of $P'$, which proves that $P'$ is strongly closed in $\mathcal{B}_{B(P)}$.
\end{proof}

\begin{theorem}
The algorithm above outputs exactly one representative of every isomorphism class of \CQmarkedpg with $n$ vertices.
\end{theorem}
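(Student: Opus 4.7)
The plan is to split the statement into two claims: (i) \emph{completeness} — every isomorphism class of \CQmarked pregraph on $n$ vertices has at least one representative among the outputs, and (ii) \emph{non-redundancy} — no two outputs are isomorphic. For non-redundancy I will further separate two cases: outputs coming from the same block list $L$, and outputs coming from different block lists.

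For completeness, let $P$ be a \CQmarked pregraph on $n$ vertices. By the definition of a block partition, $P$ determines a unique block list $L = B(P)$ whose orders sum to $n$. I would first check that $L$ is \emph{acceptable} in the sense given earlier: its degree sequence is multigraphic (since $P$ itself realises it), and the bounds on the fractions of connections incident to $q_2$ and $q_3$ blocks hold (otherwise the maximality clause in the definition of a block partition would be violated). Hence $L$ is produced in Step~1. Starting from the \pCQmarkedpg $P_0\in\mathcal{B}_L$ associated to $L$, the recursive procedure exhaustively considers, at each stage, every valid way of connecting the chosen orbit $O$ of deficient vertices to other deficient vertices. By induction on the number of deficient vertices, the unique sequence of choices that agrees with $P$ is among those explored, and hence $P$ is eventually output.

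For non-redundancy within a fixed block list $L$, the key input is Lemma~\ref{lem:stronglyclosedpg}: every \pCQmarkedpg encountered in the recursion is strongly closed in $\mathcal{B}_L$. I would argue inductively down the recursion tree. Suppose two outputs $P_1,P_2\in\mathcal{B}_L$ are isomorphic via $\sigma$. Trace back to their common closest ancestor $P$ in the recursion tree. By strong closure of $P$ in $\mathcal{B}_L$, the subgraph of $P_1$ (and of $P_2$) isomorphic to $P$ is exactly the embedded copy of $P$, so $\sigma$ restricts to an automorphism of $P$. At the recursive step applied to $P$, the algorithm picks an orbit $O$ of deficient vertices under $\mathrm{Aut}(P)$ and adds connections to $O$ in all ways up to isomorphism. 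Since $\sigma\in\mathrm{Aut}(P)$ maps $O$ to $O$, the children of $P$ taken by $P_1$ and by $P_2$ are isomorphic \pCQmarkedpgs, hence equal (as the step produces pairwise non-isomorphic children). This contradicts $P$ being the common ancestor, unless $P_1=P_2$. The final "completion" step in~(c) is handled by the preceding theorem, which guarantees pairwise non-isomorphic completions once the automorphism group acts trivially on deficient vertices.

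Non-redundancy across different block lists follows immediately from uniqueness of the block partition: if two \CQmarked pregraphs are isomorphic, their block partitions must be isomorphic, and hence their block lists coincide, so they belong to the same $\mathcal{B}_L$. Combining the three parts yields exactly one representative per isomorphism class. The main obstacle is the within-$L$ argument, because it requires correctly combining three ingredients — strong closure along the recursion (Lemma~\ref{lem:stronglyclosedpg}), the orbit-based branching rule in step~(d), and the trivial-action criterion of the preceding theorem for the final completion phase — into a single inductive no-duplicate argument. The completeness part and the cross-$L$ part are essentially bookkeeping once uniqueness of the block partition is invoked.
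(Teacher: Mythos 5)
Your proposal is correct and follows essentially the same route as the paper: the paper's own proof simply cites Lemma~\ref{lem:closedpg}, Lemma~\ref{lem:stronglyclosedpg} and the uniqueness of the block partition, and your three-part decomposition (completeness via acceptability of $B(P)$, cross-list non-redundancy via uniqueness of the block partition, within-list non-redundancy via strong closure plus the trivial-action theorem) is exactly the expansion of that one-line argument. No gap; you have merely written out the details the paper leaves implicit.
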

\begin{proof}
This theorem follows from Lemma~\ref{lem:closedpg} and Lemma~\ref{lem:stronglyclosedpg}, together with the fact that each \CQmarkedpg has a unique block partition.
\end{proof}

When we construct the \pCQmarkedpg $P$ corresponding to a list we also construct the automorphism group of $P$, i.e., we construct a set of generators for the automorphism group based upon the automorphisms of the blocks and the isomorphism of similar blocks. For further computations of the automorphism group we use the program {\em nauty} \cite{McK81__2}.

For step~\ref{alg:pg_orbit_selection} we use McKay's canonical construction path method \cite{McK96}. Given a \pCQmarkedpg $P$ and an orbit $O$ of deficient vertices, we first calculate the orbits of unordered pairs of deficient vertices $\{x,y\}$ such that $\{x,y\}\cap O$ is not empty. For each orbit of unordered pairs we choose one pair in that orbit and connect these vertices if this is a valid connection. There are two reasons why a connection could be invalid: it might create a new block, or it might create a subgraph which does not contain all the vertices and does not contain any deficient vertices. In case this is a valid connection, we still need to verify that it is the canonical operation to obtain the resulting \pCQmarkedpg $P'$. This is done by labelling each vertex $v$ with a 2-tuple $(x_1,x_2)$. In this tuple $x_1$ is the label of $v$ in a canonical labelling of $P$ and $x_2$ is the label of $v$ in a canonical labelling of $P'$. This operation is accepted if and only if the new connection is in the orbit of connections in $P'$ which have a non-empty intersection with $O$ and for which the vertices have the lexicographically smallest vertex labels. It is often not needed to construct a canonical labelling of $P'$, since the operation can already be discarded as being not canonical based on the values of $x_1$ for the vertices.


\section{Delaney-Dress graphs}

Given a Delaney-Dress graph $G$ we can easily construct a \CQmarked pregraph $P$ from $G$ by marking the edges with colour 0 and colour 2 and then removing all original colours.
When we want to generate Delaney-Dress graphs from \CQmarked pregraphs, then we want to go in the other direction, i.e., we need to assign colours 0 and 2 to the marked quotients of $C_4$ in the \CQmarked pregraph.
Clearly the construction above leads to a unique \CQmarked pregraph corresponding to a Delaney-Dress graph, and so different \CQmarked pregraphs will lead to different Delaney-Dress graphs.

We need to check that different colour assignments do not lead to isomorphic Delaney-Dress graphs.
In the cases where this does happen, we only accept one of these isomorphic copies.

A first observation we can make is that if we swap the colours in a quotient of type $q_2$ or in a quotient of type $q_4$, we always get an isomorphic Delaney-Dress graph.
We indeed always have the isomorphism that fixes all the vertices and all the edges that are not in that quotient and that interchanges the two edges, resp. semi-edges, in that quotient.
This means that we can just choose an arbitrary colouring for these quotients and can focus the isomorphism rejection on the quotients of type $q_1$ and the quotients of type $q_3$.

Given a partially coloured Delaney-Dress graph $D$ such that the uncoloured subgraphs are quotients of type $q_1$ and of type $q_3$, the set of uncoloured quotients is denoted by $U$. Note that if $n$ is the number of vertices in $D$, then $U$ contains at most $\frac{n}{2}$ elements and in most cases it will be much less than that. We can define a bijection between the set of valid colour assignments for $D$ and the set of binary vectors with length $|U|$. We start by choosing a matching (i.e., two non-adjacent edges) in each quotient of type $q_1$. For a quotient $u$ of type $q_1$ we denote this matching by $m(u)$. We also label the quotients in $U$ with the numbers 1 to $|U|$.

A colouring $c$ is mapped to a binary vector $v_c$ as follows. The $i$th coordinate of $v_c$ corresponds to the uncoloured quotient $u\in U$ that has label $i$. If $u$ is of type $q_1$, then the $i$th coordinate of $v_c$ is equal to 0 if the edges in $m(u)$ receive colour 0, and is equal to 1 if these edges receive colour 2. If $u$ is of type $q_3$, then the $i$th coordinate of $v_c$ is equal to 0 if the semi-edges in $u$ receive colour 0, and is equal to 0 otherwise.

Given an automorphism $\sigma$ of $D$ and a binary vector $v_c$ corresponding to a colouring $c$, we can easily construct the binary vector $v_c'$ that corresponds to the colouring $c'$ of $D$ when we would apply $\sigma$ to the coloured Delaney-Dress graph.
The automorphism $\sigma$ will map a quotient $u\in U$ to another quotient $u' \in U$, and clearly $u$ and $u'$ will be of the same type.
If for each vertex the factor in which it is contained is known, it is sufficient to know the image of one vertex of $u$ to determine $u'$.
In case $u$ is of type $q_3$, then the coordinate in $v_c'$ corresponding to $u'$ will have the same value as the coordinate in $v_c$ corresponding to $u$.
In case $u$ is of type $q_1$, then we need to check whether $m(u)$ is mapped to $m(u')$ by $\sigma$.
(For this it is sufficient to know the image of one edge of $m(u)$.)
If this is the case, then the coordinate in $v_c'$ corresponding to $u'$ will have the same value as the coordinate in $v_c$ corresponding to $u$.
Otherwise they will have different values.

This means that we can perform the orbit calculations on the set of binary vectors. We use the \emph{union-find} algorithm on the set of all binary vectors with length $|U|$ to find which coloured Delaney-Dress graphs are isomorphic.


\section{Testing}

Small errors are always easily made in both mathematical proofs and computer programs.
In computer programs however, they are often more concealed and less subject to scrutinous checking.
It is therefore important to perform tests of the computer programs.
Preferably using indepently written programs based on different algorithms.

The numbers of \CQmarkable pregraphs up to 20 vertices have been compared to the numbers obtained in \cite{pregraphs}.
Since the techniques used in both cases are very different this offers an independent test for the implementation. 
The program used in \cite{pregraphs} had already itself been compared to manual results.

The numbers of Delaney-Dress graphs up to $n=10$ vertices have been compared to the results obtained in \cite{HOPF:13}.


\section{Results}

The algorithms described in this article have been implemented in C as the program \texttt{ddgraphs}. It is available at \cite{ddgraphssite}.

Table~\ref{tab:overview_ddgraphs_CQmarkable} gives an overview of the numbers of block lists, the numbers of \CQmarked pregraphs and the numbers of \CQmarkable pregraphs on up to $n=35$ vertices.

Table~\ref{tab:overview_ddgraphs} shows an overview of the numbers of Delaney-Dress graphs on up to $n=35$ vertices.

The numbers for graphs on more than 30 vertices were obtained by splitting the generation is several parts.
This was done by generating the block lists and distributing which block lists needed to be extended.
Since the generation of the block lists is negligable compared to the remaining generation process, this splitting in parts can be done very efficiently.

\begin{landscape}
\begin{longtable}{rrrrrrrr}
\caption[An overview of the number of block lists, the number of \CQmarked pregraphs and the number of \CQmarkable pregraphs with $n$ vertices.]{An overview of the number of block lists, the number of \CQmarked pregraphs and the number of \CQmarkable pregraphs with $n$ vertices. For each coloumn the time needed to generate those structures using the program \texttt{ddgraphs} is given. For the \CQmarkable pregraphs also the time needed by \texttt{pregraphs} is given \cite{pregraphs}. All timings were done on a 2.40 GHz Intel Xeon.}\label{tab:overview_ddgraphs_CQmarkable}\\
\toprule
\cth{$n$} & \cth{lists} & \cth{time} & \cth{\CQmarked} & \cth{time} & \cth{\CQmarkable} & \cth{time} & \cth{time} \\
 &  & \cth{\texttt{ddgraphs}} &  & \cth{\texttt{ddgraphs}} &  & \cth{\texttt{ddgraphs}} & \cth{\texttt{pregraphs}} \\
\midrule
\endfirsthead
\caption[]{An overview of the number of block lists, the number of \CQmarked pregraphs and the number of \CQmarkable pregraphs with $n$ vertices. (Continued)}\\
\midrule
\cth{$n$} & \cth{lists} & \cth{time} & \cth{\CQmarked} & \cth{time} & \cth{\CQmarkable} & \cth{time} & \cth{time} \\
 &  & \cth{\texttt{ddgraphs}} &  & \cth{\texttt{ddgraphs}} &  & \cth{\texttt{ddgraphs}} & \cth{\texttt{pregraphs}} \\
\midrule
\endhead
\midrule  \multicolumn{8}{r}{{Continued on next page}} \\ 
\endfoot
\bottomrule
\endlastfoot

1 & 1 & 0.0s & 1 & 0.0s & 1 & 0.0s & 0.0s \\
2 & 5 & 0.0s & 5 & 0.0s & 3 & 0.0s & 0.0s \\
3 & 2 & 0.0s & 2 & 0.0s & 2 & 0.0s & 0.0s \\
4 & 13 & 0.0s & 13 & 0.0s & 9 & 0.0s & 0.0s \\
5 & 7 & 0.0s & 7 & 0.0s & 7 & 0.0s & 0.0s \\
6 & 31 & 0.0s & 31 & 0.0s & 29 & 0.0s & 0.0s \\
7 & 25 & 0.0s & 27 & 0.0s & 27 & 0.0s & 0.0s \\
8 & 103 & 0.0s & 109 & 0.0s & 105 & 0.0s & 0.0s \\
9 & 86 & 0.0s & 118 & 0.0s & 118 & 0.0s & 0.0s \\
10 & 311 & 0.0s & 394 & 0.0s & 392 & 0.0s & 0.1s \\
11 & 260 & 0.0s & 546 & 0.0s & 546 & 0.0s & 0.3s \\
12 & 938 & 0.0s & 1 726 & 0.0s & 1 722 & 0.1s & 1.3s \\
13 & 763 & 0.0s & 2 701 & 0.1s & 2 701 & 0.1s & 5.2s \\
14 & 2 521 & 0.0s & 7 955 & 0.3s & 7 953 & 0.3s & 22.0s \\
15 & 1 968 & 0.0s & 13 966 & 0.4s & 13 966 & 0.4s & 94.8s \\
16 & 6 776 & 0.0s & 40 039 & 1.4s & 40 035 & 1.5s & $\approx$ 7.0m \\
17 & 5 171 & 0.0s & 75 341 & 2.3s & 75 341 & 2.2s & $\approx$ 31.7m \\
18 & 16 557 & 0.0s & 210 765 & 8.1s & 210 763 & 8.0s & $\approx$ 2.5h \\
19 & 12 321 & 0.0s & 420 422 & 13.9s & 420 422 & 14.0s & $\approx$ 11.6h \\
20 & 40 622 & 0.1s & 1 162 196 & 46.5s & 1 162 192 & 46.6s & $\approx$ 56.0h \\
21 & 29 843 & 0.1s & 2 419 060 & 86.8s & 2 419 060 & 86.7s \\
22 & 93 166 & 0.2s & 6 626 610 & $\approx$ 4.6m & 6 626 608 & $\approx$ 4.6m \\
23 & 67 345 & 0.2s & 14 292 180 & $\approx$ 9.2m & 14 292 180 & $\approx$ 9.2m \\
24 & 213 822 & 0.5s & 38 958 571 & $\approx$ 28.2m & 38 958 567 & $\approx$ 28.4m \\
25 & 153 388 & 0.5s & 86 488 183 & $\approx$ 59.7m & 86 488 183 & $\approx$ 59.8m \\
26 & 467 050 & 1.2s & 235 004 260 & $\approx$ 3.0h & 235 004 258 & $\approx$ 3.0h \\
27 & 331 411 & 1.2s & 534 796 010 & $\approx$ 6.6h & 534 796 010 & $\approx$ 6.6h \\
28 & 1 018 009 & 3.0s & 1 450 990 715 & $\approx$ 19.2h & 1 450 990 711 & $\approx$ 19.2h \\
29 & 719 250 & 2.9s & 3 373 088 492 & $\approx$ 43.7h & 3 373 088 492 & $\approx$ 43.7h \\
30 & 2 136 996 & 6.8s & 9 147 869 420 & $\approx$ 5.3 days & 9 147 869 418 & $\approx$ 5.3 days \\
31 & 1 498 823 & 6.5s & 21 667 784 040 & $\approx$ 12.5 days & 21 667 784 040 & $\approx$ 12.5 days \\
32 & 4 468 381 & 16.1s & 58 791 212 723 & $\approx$ 36.0 days & 58 791 212 719 & $\approx$ 36.1 days \\
33 & 3 126 211 & 15.4s & 141 583 919 924 & $\approx$ 86.2 days & 141 583 919 924 & $\approx$ 86.3 days \\
34 & 9 071 315 & 34.5s & 384 597 958 574 & $\approx$ 246.2 days & 384 597 958 572 & $\approx$ 246.4 days\\
35 & 6 316 138 & 33.1s & 940 092 232 951 & $\approx$ 600.6 days & 940 092 232 951 & $\approx$ 601.1 days \\

\end{longtable}
\end{landscape}

\begin{table}
\begin{center}
\begin{tabular}{rrrr}
\toprule
\cth{$n$} & \cth{Delaney-Dress graphs} & \cth{time} & \cth{rate} \\
\midrule
1 & 1 & 0.0s &  \\
2 & 7 & 0.0s &  \\
3 & 3 & 0.0s &  \\
4 & 22 & 0.0s &  \\
5 & 13 & 0.0s &  \\
6 & 70 & 0.0s &  \\
7 & 67 & 0.0s &  \\
8 & 315 & 0.0s &  \\
9 & 393 & 0.0s &  \\
10 & 1 577 & 0.0s &  \\
11 & 2 515 & 0.0s &  \\
12 & 9 480 & 0.1s & 94 800.00/s \\
13 & 17 205 & 0.1s & 172 050.00/s \\
14 & 61 594 & 0.3s & 205 313.33/s \\
15 & 123 953 & 0.4s & 309 882.50/s \\
16 & 433 030 & 1.6s & 270 643.75/s \\
17 & 931 729 & 2.5s & 372 691.60/s \\
18 & 3 196 841 & 9.1s & 351 301.21/s \\
19 & 7 258 011 & 16.3s & 445 276.75/s \\
20 & 24 630 262 & 55.0s & 447 822.95/s \\
21 & 58 309 071 & 105.9s & 550 605.01/s \\
22 & 196 266 434 & $\approx$ 5.8m & 568 064.93/s \\
23 & 481 330 615 & $\approx$ 12.0m & 666 478.28/s \\
24 & 1 610 942 856 & $\approx$ 38.8m & 691 629.25/s \\
25 & 4 071 117 829 & $\approx$ 1.4h & 785 187.34/s \\
26 & 13 569 014 653 & $\approx$ 4.6h & 826 265.50/s \\
27 & 35 202 390 477 & $\approx$ 10.6h & 919 758.85/s \\
28 & 116 994 675 348 & $\approx$ 33.8h & 960 576.60/s \\
29 & 310 624 700 725 & $\approx$ 3.4 days & 1 049 801.45/s \\
30 & 1 030 455 432 427 & $\approx$ 11.0 days & 1 084 892.06/s \\
31 & 2 792 944 867 743 & $\approx$ 27.4 days & 1 177 978.85/s \\
32 & 9 256 071 637 206 & $\approx$ 88.8 days & 1 205 812.64/s \\
33 & 25 557 439 215 047 & $\approx$ 231.9 days & 1 275 360.82/s \\
34 & 84 676 700 443 607 & $\approx$ 2.1 years & 1 297 545.26/s\\
35 & 237 766 612 990 437 & $\approx$ 5.6 years & 1 353 707.99/s\\
\bottomrule
\end{tabular}

\end{center}
\caption{An overview of the number of Delaney-Dress graphs and the time needed by \texttt{ddgraphs} to generate these graphs when run on a 2.40 GHz Intel Xeon.}\label{tab:overview_ddgraphs}
\end{table}


\bibliographystyle{plainyr}

\end{document}